\documentclass[10pt]{article}
\usepackage{amsmath,amssymb,amsthm,enumerate,color,bbm,a4wide}
\usepackage[colorlinks]{hyperref}

\parskip 0.2cm
\parindent 0cm

\def\Nset{\mathbb{N}}
\def\Zset{\mathbb{Z}}
\def\Rset{\mathbb{R}}
\def\mbc{\mathcal{C}}
\def\rme{\mathrm{e}}

\def\rmd{\mathrm{d}} 
\def\esp{\mathbb E}
\def\pr{\mathbb P}

\def\cov{\mathrm{cov}}

\def\convvague{\stackrel{\mbox{\tiny\rm v}}{\to}} 
\def\eqdistr{\stackrel{\mbox{\tiny\rm d}}{=}}

\newcommand{\CTE}{{\rm CTE}}
\newcommand{\ES}{{\rm ES}}
\newcommand{\VAR}{{\rm VAR}}
\newcommand\1[1]{\mathbbm{1}_{#1}}
\newcommand{\sequence}[3]{\{#1_#2,#2\in#3\}}
\newcommand{\scalfunccev}{b}
\newcommand{\cevcdf}{\boldsymbol{\Psi}}
\newcommand{\hpi}[2]{\boldsymbol{\Pi}_{#1,#2}}  
\newcommand\nucond{\boldsymbol{\mu}}
\newcommand{\numult}[1]{\boldsymbol{\nu}_{#1}}

\newtheorem{theorem}{Theorem}[section]
\newtheorem{lemma}[theorem]{Lemma}

\newtheorem{proposition}[theorem]{Proposition}
\newtheorem{definition}{Definition}
\newtheorem{hypothesis}{Assumption}

\theoremstyle{remark}
\newtheorem{remark}[theorem]{Remark}

\numberwithin{equation}{section}

\begin{document}
\title{Heavy tailed time series with extremal independence}

\author{Rafa{\l} Kulik\thanks{University of Ottawa} \and Philippe
  Soulier\thanks{Universit\'e de Paris-Ouest}}

\date{}
\maketitle

\begin{abstract}
  We consider heavy tailed time series whose finite-dimensional distributions
  are extremally independent in the sense that extremely large values cannot be
  observed consecutively. This calls for methods beyond the classical
  multivariate extreme value theory which is convenient only for extremally
  dependent multivariate distributions. We use the Conditional Extreme Value
  approach to study the effect of an extreme value at time zero on the future of
  the time series. In formal terms, we study the limiting conditional
  distribution of future observations given an extreme value at time zero.  To
  this purpose, we introduce conditional scaling functions and conditional
  scaling exponents. We compute these quantities for a variety of models,
  including Markov chains, exponential autoregressive models, stochastic
  volatility models with heavy tailed innovations or volatilities.
\end{abstract}

\paragraph{Keywords:} Multivariate regular variation, extremal independence,
conditional scaling exponent, Markov chains, stochastic volatility models.

\section{Introduction}
\label{sec:intro}
Let $\sequence{X}{t}{\Zset}$ be a strictly stationary time series. We say that
$\{X_t\}$ is regularly varying if all its finite dimensional distributions are
regularly varying, i.e.~for each $h\geq0$, there exists a nonzero Radon measure
$\numult{h}$ on $[-\infty,\infty]^{h+1}\setminus\{\boldsymbol{0}\}$, called the
exponent measure, which puts zero mass at infinity, and a scaling function
$c$ such that, as $s\to\infty$,
\begin{align}
  \label{eq:def-mrv}
  s \pr \left( \frac{(X_0,\dots,X_h)}{c(s)} \in \cdot \right)
  \convvague \numult{h} \; ,
\end{align}
where $\convvague$ means vague convergence, to be understood here on the space
$[-\infty,\infty]^{h+1}\setminus\{\boldsymbol{0}\}$. Recall that a sequence of measures $\nu_n$
defined on a complete separable metric space~$E$ (endowed with its Borel $\sigma$-field) is said to
converge vaguely to a measure $\nu$ if $\nu_n(f)\to\nu(f)$ for all continuous functions with compact
support, or equivalently $\nu_n(K)\to\nu(K)$ for all compact sets $K$ with $\nu(\partial K)=0$. See
\cite{resnick:1987} for more details.  This assumption implies that the function $c$ is regularly
varying with index $1/\alpha$ for some $\alpha>0$, the measure~$\numult{h}$ is homogeneous of degree
$-\alpha$ and the marginal distribution of $X_0$ is heavy tailed with positive tail index~$\alpha$.
To avoid trivialities, we will only consider distributions that are not totally
skewed to the left, that is we assume that $\lim_{x\to\infty}
\pr(X_0>x)/\pr(|X_0|>x)>0$. In that case, a possible choice for the scaling
function in~(\ref{eq:def-mrv}) is $c(s)=F_0^\leftarrow(1-1/s)$, where $F_0$ is
the distribution function of $X_0$, and we can rewrite~(\ref{eq:def-mrv}) as
\begin{align}
  \label{eq:mrv-x}
  \frac{\pr\left( x^{-1}(X_0,\dots,X_h) \in \cdot \right)}
  {\pr(X_0>x)} \convvague \numult{h} \; ,
\end{align}
on $[-\infty,\infty]^{h+1}\setminus\{\boldsymbol{0}\}$, as $x\to\infty$.

If $h\geq 1$, there exist two fundamentally different cases: either the exponent
measure is concentrated on the axes or it is not. The former case is referred to
as extremal independence and the latter as extremal dependence. In other words,
extremal independence means that no two components can be extremely large at the
same time, and extremal dependence means that two components can be
simultaneously extremely large.  The suitably renormalized componentwise maxima
of an i.i.d.~sequence of extremally independent random vectors converge to a
max-stable distribution with independent marginals. See e.g.~\cite{resnick:2002}
or \cite{resnick:2007}. This definition is rather weak and it must be noted that
if two components of the vector are extremally dependent then the whole vector
is extremally dependent; e.g. the vector $(X,X,Y)$, if multivariate regularly
varying (MRV), is extremally dependent even if $X$ and $Y$ are independent.
For most time series models, the distribution of $(X_0,\dots,X_h)$ is either extremally dependent or
extremally independent for all $h$.

In a time series context, we may want to assess the influence of an extreme
event at time zero on future observations. If the finite dimensional
distributions of the time series model under consideration are extremally
independent or more generally if the vector $(X_0,X_m,\dots,X_h)$ is extremally
independent for some $m\geq1$, then, for any set $A$ which is bounded away from
zero in $\Rset^{h-m+2}$,
\begin{align}
  \label{eq:extremal-indep-joint-exceed}
  \lim_{x\to\infty} \frac{ \pr(X_0>xu_0, (X_m\dots,X_h) \in xA)}{\pr(X_0>x)} = 0
  \; .
\end{align}
Thus in case of extremal independence the exponent measure $\numult{h}$ provides
no information on (most) extreme events occurring after an extreme event at
time~0.  In concrete terms, if the series $\{X_t\}$ represent financial losses,
extremal independence means that an extreme loss at time zero will be followed
by another extreme loss of at least the same magnitude with an extremely small
probability. This is good news, but it is still of great importance to know how
likely a one million euro loss is to be followed by a smaller loss of, say, a
hundred thousand euros, which might be disastrous after the previous loss. A
moderate flood can still be devastating after a major one.  Since the exponent
measure provides no information, other tools must be used to quantify the
influence of an extreme event at time zero on future events.

In order to obtain a non degenerate limit
in~(\ref{eq:extremal-indep-joint-exceed}) and a finer analysis of the sequence
of extreme values, it is necessary to change the normalization
in~(\ref{eq:mrv-x}), and possibly the space on which we will assume that vague
convergence holds. One idea is to find a sequence of normalizations $b_j(x)$,
$j\geq1$ such that for each $h\geq1$, the conditional distribution of
$(X_0/x,X_1/b_1(x),\dots,X_h(x)/b_h(x))$ given $X_0>x$ has a non degenerate
limit. Finding a limiting conditional distribution of a random vector given one
extreme component is a very old problem.  It was recently rigorously
investigated for bivariate distributions using the concept of regular variation
on cones by \cite{heffernan:resnick:2007} and \cite{das:resnick:2011}. See the
references in these paper for the earlier literature.  If such a limiting
distribution exists, the vector $(X_0,\dots,X_h)$ is said to satisfy the
``Conditional Extreme Value'' (hereafter CEV) assumption.  This expression was
introduced in this context by \cite{das:resnick:2011}\footnote{Note that this
  expression is also used in the extreme value literature to refer to the
  standard extreme value theory in presence of a covariate.}.  It must be noted
that if they exist, limiting conditional distributions given an extreme
component need not be extreme value distributions and the variables
$X_1,\dots,X_h$ need not be in the domain of attraction of an extreme value
distribution.  Considering i.i.d.~random variables, it is easily seen that any
distribution can arise as a limiting distribution. Another important feature of
the CEV approach is that it is applicable to extremally dependent regularly
varying multivariate distributions as well; in that case, the limiting
distribution is entirely determined by the exponent measure. This is important
in view of statistical inference, since the same methodology can be applied to
both types of distributions.

It is the goal of this paper to apply the CEV approach to the finite dimensional
distributions of stationary (and some non stationary) time series, both
extremally dependent and independent, though with a main focus on extremally
independent time series.

In Section~\ref{sec:consequences}, we will state Assumption~\ref{hypo:conditional-indep-measure}
which expresses the CEV condition in the correct vague convergence framework introduced by
\cite{heffernan:resnick:2007} and give several applications. One important problem with extremally
independent random variables is the tail behavior of their product. The CEV condition alone does not
guarantee that the product is regularly varying. In Section~\ref{sec:tail-product}, we will
strengthen it and obtain a result on the tail behavior of products.  In
Section~\ref{sec:conv-moment}, we will further strengthen it to obtain the convergence of
conditional moments. This convergence can be applied to study risk measures such as the conditional
tail expectation; this will be discussed in Section~\ref{sec:CTE}.  In
Section~\ref{sec:tail-process} we extend the tail process, introduced by \cite{basrak:segers:2009}
in the extremally dependent case, to the extremally independent case and we give a representation of
the limiting conditional distributions in terms of this tail process.  In Section~\ref{sec:hrv} we
will compare the CEV approach to Hidden Regular Variation.

In the following Sections~\ref{sec:markov-chains}, \ref{sec:explin}, and
\ref{sec:sv} we will study several models of regularly varying time series which
satisfy Assumption~\ref{hypo:conditional-indep-measure}.  In
Section~\ref{sec:markov-chains}, we give a general result for extremally
independent Markov chains.
In Section~\ref{sec:explin}, we study a non-Markovian exponential linear process
and in Section~\ref{sec:sv}, we will finally consider stochastic volatility
models with light tailed or heavy tailed volatilities.  It should be pointed out
that the models studied in Sections~\ref{sec:explin} and~\ref{sec:sv} allow for
some form of long memory. This is of practical importance since it is one of the
so-called stylized facts of financial time series (log-returns) that volatility
may exhibit long memory.

Section~\ref{sec:prooftheomarkov} contains the proof of our main result on
Markov chains and Section~\ref{sec:concluding} discusses some directions of
further research, the most important one being statistical inference.

\section{Limiting conditional distributions and conditional scaling exponents}
\label{sec:consequences}

We now introduce the main Assumption of this paper.  It is stated in terms of
regular variation on space smaller than $[-\infty,\infty]^{h+1}\setminus
\{\boldsymbol{0}\}$.

\begin{hypothesis}
  \label{hypo:conditional-indep-measure}

  There exist scaling functions $b_j$, $j\geq1$ and Radon measures $\nucond_h$,
  $h\geq1$, on $(0,\infty]\times[-\infty,+\infty]^h$, $h\geq1$, such that
  \begin{align}
    \label{eq:indep-vague-noncentered}
    \frac1{\pr(X_0>x)} \pr \left( \left(\frac{X_0}{x},
        \frac{X_1}{\scalfunccev_{1}(x)}, \cdots, \frac{X_h}{\scalfunccev_{h}(x)}
      \right) \in \cdot \right) \convvague \nucond_h \; ,
  \end{align}
  on $(0,\infty]\times[-\infty,+\infty]^h$ and for all $y_0>0$,
 \begin{enumerate}[a.]
 \item the measure $\nucond_h([y_0,\infty]\times\cdot)$ on $\Rset^h$ is not
   concentrated on a line through infinity;  \label{it:not-too-small}
\item the measure $\nucond_h([y_0,\infty] \times \cdot)$ on
   $\Rset^h$ is not concentrated on a hyperplane; \label{it:not-too-large}
\item the measure $\nucond_h(\cdot \times \Rset^h)$ on $(0,\infty]$ is not
   concentrated at infinity. \label{it:reg-not-slow}
\end{enumerate}

\end{hypothesis}

Notice that vague convergence here must hold on a different space than
in~(\ref{eq:def-mrv}). This is of importance since the compact sets of
$[-\infty,\infty]^{h+1}\setminus\{\boldsymbol{0}\}$ and
$(0,\infty]\times[-\infty,+\infty]^h$ differ. For instance, if $h=1$,
$[0,\infty]\times[1,\infty]$ is compact in
$[-\infty,\infty]^2\setminus\{\boldsymbol{0}\}$ but not in
$(0,\infty]\times[-\infty,+\infty]$. More generally, a subset $K$ of
$[-\infty,\infty]^{h+1}\setminus\{\boldsymbol{0}\}$ is relatively compact if
there exists $\epsilon>0$ such that $\boldsymbol{x}\in K$ implies that at least
one component of $\boldsymbol{x}$ is greater than $\epsilon$; a subset $L$ of
$(0,\infty]\times[-\infty,\infty]^h$ is relatively compact if there exists
$\epsilon>0$ such that $\boldsymbol{x}\in L$ implies that the first component of
$\boldsymbol{x}$ is greater than $\epsilon$.

For $h=1$, Assumption~\ref{hypo:conditional-indep-measure} is Condition (5) in
\cite{heffernan:resnick:2007}.  We extend it here to a multidimensional
framework and to different scaling functions $b_j$, $j\geq1$.  This is a
fundamental necessity in the time series context. As already mentioned in the
introduction, Assumption~\ref{hypo:conditional-indep-measure} does not require
stationarity of the time series $\{X_t\}$ and is compatible with
both extremal dependence and independence.

We now make some comments on the conditions in
Assumption~\ref{hypo:conditional-indep-measure}.
\begin{itemize}
\item
  Assumption~\ref{hypo:conditional-indep-measure}\ref{it:not-too-small}
  implies that the scaling functions $b_j$ are not too small.
\item
  Assumption~\ref{hypo:conditional-indep-measure}\ref{it:not-too-large}
  implies that the scaling functions $b_j$ are not too large. For
  instance, if $X_0$ and $X_1$ are independent, choosing $b_1(x)=x$
  would yield a measure concentrated on $(0,\infty]\times\{0\}$.
\item Assumption~\ref{hypo:conditional-indep-measure}\ref{it:reg-not-slow} implies that the marginal
  distribution of $X_0$ is heavy tailed  with positive tail index. Hereafter,
  we let $\alpha$ denote the tail index.
\item By construction, $\nucond_h([1,\infty)\times\Rset^h)=1$,
  i.e. the measure $\nucond_h$ restricted to $[1,\infty)\times\Rset^h$
  is a probability measure. Thus we can define the multivariate
  distribution functions $\cevcdf_h$ on $[1,\infty)\times\Rset^h$ by
\begin{align}
  \label{eq:def-cevcdf}
  \cevcdf_h(\boldsymbol{y}) = \nucond_h \left( [1,y_0] \times \prod_{j=1}^h
    [-\infty,y_j]\right) \; ,
  \end{align}
where $\boldsymbol{y}=(y_0,y_1,\ldots,y_h)\in[1,\infty)\times\Rset^h$.  For all
continuity points $\boldsymbol{y}$ of $\cevcdf_h$, we obtain
\begin{align}
   \label{eq:lim-cevcdf}
   \cevcdf_h(\boldsymbol{y})= \lim_{x\to\infty} \pr \left( \frac{X_0}x \leq
     y_0,\frac{X_1}{b_1(x)} \leq y_1, \dots, \frac{X_h}{\scalfunccev_h(x)} \leq
     y_h \mid X_0>x \right) \; .
\end{align}

\end{itemize}

The most important consequence of
Assumption~\ref{hypo:conditional-indep-measure}, is that the functions $b_j$,
$j\geq 1$ are regularly varying and that the limiting measure $\nucond_h$ has
some homogeneity property. We state these properties as a lemma whose proof is a
standard application of the Convergence to Type Theorem. See
\cite[Proposition~1]{heffernan:resnick:2007}.
\begin{lemma}
  \label{lem:homogeneity}
  If Assumption~\ref{hypo:conditional-indep-measure} holds, then there exists
  $\kappa_j\in\Rset$ such that
  \begin{align*}
    \lim_{t\to\infty} \frac{b_j(ty)} {b_j(t)} = y^{\kappa_j} \;
  \end{align*}
  and for all $y_0>0$ and $(y_1,\dots,y_h)\in\Rset^h$,
  \begin{align}
    \label{eq:homogeneity-cev}
    \nucond_h \left( (ty_0,\infty] \times \prod_{i=1}^h
      [-\infty,t^{\kappa_i}y_i] \right) = t^{-\alpha} \nucond_h \left(
      (y_0,\infty] \times \prod_{i=1}^h [-\infty,y_i] \right) \; .
  \end{align}

\end{lemma}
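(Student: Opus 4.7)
The plan is to apply the Convergence to Type Theorem to the marginals of the vague limit in Assumption~\ref{hypo:conditional-indep-measure}. Starting with the $X_0$-marginal, the scalar convergence $\pr(X_0>xy)/\pr(X_0>x) \to \nucond_h((y,\infty]\times\Rset^h)$ holds at continuity points $y>0$; by Assumption~\ref{hypo:conditional-indep-measure}\ref{it:reg-not-slow} the limit is non-degenerate, so $\pr(X_0>\cdot)$ is regularly varying of some index $-\alpha<0$ and (up to a constant absorbed in $\nucond_h$) the limit equals $y^{-\alpha}$. Consequently $\pr(X_0>tx)/\pr(X_0>x)\to t^{-\alpha}$ for every $t>0$, which identifies the tail index $\alpha$ of the lemma.

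For each $j\in\{1,\dots,h\}$ and fixed $t>0$, I would next apply the bivariate Convergence to Type Theorem to the $(X_0,X_j)$-marginal, comparing the two scaling schemes $(x,b_j(x))$ and $(tx,b_j(tx))$. Both produce vague limits, and the two must coincide up to the known factor $t^{-\alpha}$ and a rescaling $b_j(x)/b_j(tx)$ of the second coordinate. Assumption~\ref{hypo:conditional-indep-measure}\ref{it:not-too-small}--\ref{it:not-too-large} ensure that the conditional law of $X_j/b_j(x)$ given $X_0>x$ is neither concentrated at a single point nor leaking to $\pm\infty$, which is exactly the non-degeneracy hypothesis required by the Convergence to Type Theorem. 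The theorem then forces $b_j(tx)/b_j(x)$ to converge to a finite positive limit $c_j(t)$, uniquely determined by that non-degeneracy. The function $t\mapsto c_j(t)$ is measurable as a pointwise limit of measurable functions, and iterating the identity gives the multiplicative Cauchy equation $c_j(st)=c_j(s)c_j(t)$; the classical characterization of measurable multiplicative functions on $(0,\infty)$ then yields $c_j(t)=t^{\kappa_j}$ for some $\kappa_j\in\Rset$.

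Reassembling the scalings jointly across all coordinates, the vague limit at scale $tx$ is identified with the pushforward of $\nucond_h$ by the dilation $(y_0,y_1,\dots,y_h)\mapsto(ty_0,t^{\kappa_1}y_1,\dots,t^{\kappa_h}y_h)$, multiplied by $t^{-\alpha}$; evaluating both sides on the sets $(y_0,\infty]\times\prod_i[-\infty,y_i]$ gives~(\ref{eq:homogeneity-cev}). The main obstacle is the simultaneous coordinatewise uniqueness of the limits $c_j(t)$, which is precisely the role of conditions~\ref{it:not-too-small} and~\ref{it:not-too-large}: the former rules out $c_j(t)=0$ (mass of $\nucond_h$ collapsing along coordinate $j$ onto a lower-dimensional set), the latter rules out $c_j(t)=\infty$ (mass escaping to $y_j=\pm\infty$). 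Once these are secured, the rest is a routine manipulation of vague convergence.
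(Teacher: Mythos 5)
Your proof is correct and is precisely the ``standard application of the Convergence to Types Theorem'' that the paper invokes without detail (it simply defers to Proposition~1 of Heffernan and Resnick), carried out coordinatewise on the $(X_0,X_j)$ marginals at the two scales $x$ and $tx$ and completed by the measurable multiplicative Cauchy equation. The only blemish is that the parenthetical glosses in your last paragraph are interchanged: condition (a) of Assumption~\ref{hypo:conditional-indep-measure} (no concentration on a line through infinity) is what forbids mass escaping to $y_j=\pm\infty$, which is the degeneracy produced by a subsequential limit $c_j(t)=0$, while condition (b) (no concentration on a hyperplane) forbids collapse to a point mass at a finite value, which is what $c_j(t)=\infty$ would produce --- your pairing of conditions with the excluded limits is nevertheless the right one, only the descriptions are swapped.
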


To put emphasis on the regular variation of the functions $b_j$, we introduce the following
definition.
\begin{definition}[Conditional scaling exponent]
  \label{defi:CSE}
  Under Assumption~\ref{hypo:conditional-indep-measure}, for $h\geq1$, we call the index $\kappa_h$
  of regular variation of the functions $\scalfunccev_h$ the (lag $h$) {\bf conditional scaling
    exponent}.
\end{definition}
The exponents $\kappa_h$, $h\geq1$ reflect the influence of an extreme event at time zero on future
lags. Even though we expect this influence to decrease with the lag in the case of extremal
independence, these exponents are not necessarily monotone decreasing.  See
Sections~\ref{sec:explin} and~\ref{sec:sv-heavy-innovation-leverage}.

Considering only the bivariate distribution of $(X_0,X_h)$, we have the following properties.
\begin{itemize}
\item If $(X_0,X_h)$ is multivariate regularly varying in the sense of (\ref{eq:def-mrv}) and
  Assumption~\ref{hypo:conditional-indep-measure} holds, then $\kappa_h\leq1$. If $(X_0,X_h)$ is
  extremally dependent then $\kappa_h=1$. If $b_h(x)=o(x)$, which holds in particular if
  $\kappa_h<1$, then $(X_0,X_h)$ is extremally independent. Negative values of $\kappa_h$ are
  allowed. This means that extremely large values are typically followed by extremely small
  (absolute) values.
\item Condition (\ref{eq:def-mrv}) and extremal independence do not imply that
  Assumption~\ref{hypo:conditional-indep-measure} holds, i.e. the existence of limiting conditional
  distributions. See Sections~\ref{sec:hrv} and~\ref{sec:counter}.
\item In most of the examples investigated in the next sections, it will hold that $0\leq\kappa_h<1$
  for all $h$. However, there are natural examples where the scaling exponent is larger than~1. See
  Section~\ref{sec:expar}.

\end{itemize}

\subsection{Tail of products}
\label{sec:tail-product}
One application of Assumption~\ref{hypo:conditional-indep-measure} is to obtain
the tail of products of regularly varying random variables.  If a pair
$(X_0,X_h)$ is jointly regularly varying with tail index $\alpha$ and is
extremally dependent, then it is well known that the tail of the product
$X_0X_h$ is $\alpha/2$; see e.g.~\cite[Proposition~7.6]{resnick:2007}.  In the
case of extremal independence, many different tail behaviors of the product are
possible.  Under Assumption~\ref{hypo:conditional-indep-measure} and an
additional technical condition, we can obtain the tail index of $X_0X_h$. The
next result generalizes \cite[Theorem~3.1]{maulik:resnick:rootzen:2002} who
consider only the case $\kappa_h=0$; see also \cite{hazra:maulik:2011}.
As before, we denote $\boldsymbol{y}=(y_0,\ldots,y_h)$.

\begin{proposition}
  \label{prop:tail-product-cev}
  Let Assumption~\ref{hypo:conditional-indep-measure} hold and assume moreover
  that
  \begin{align}
    \label{eq:condition-moment-product-cev}
    \int_{[0,\infty]^{h+1}} \1{\{y_0y_h>1\}} \nucond_h(\rmd \boldsymbol{y}) <
    \infty \; ,
  \end{align}
  and there exists $\delta>0$ such that
  \begin{align}
    \label{eq:condition-epsilon-product-cev}
    \lim_{\epsilon\to0} \limsup_{x\to\infty}  \frac{\esp \left[ \left(
        \frac{X_0}{x} \1{\{X_0 \leq \epsilon x\}}
        \frac{X_h}{\scalfunccev_h(x)} \right)^\delta \right]}{\pr(X_0>x)} = 0 \; .
\end{align}
Then
\begin{align}
  \label{eq:tail-product-cev}
  \lim_{x\to\infty} \frac{ \pr (X_0X_h > x\scalfunccev_h(x)u) } {\pr(X_0>x)}
  = u^{-\alpha/(1+\kappa_h)}     \int_{[0,\infty]^{h+1}} \1{\{y_0y_h>1\}} \nucond_h(\rmd\boldsymbol{y})  \; .
\end{align}
\end{proposition}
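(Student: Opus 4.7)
My plan is to rewrite the tail of the product as $\pr(X_0 X_h > x b_h(x) u) = \pr\bigl((X_0/x)(X_h/b_h(x)) > u\bigr)$ and push this through Assumption~\ref{hypo:conditional-indep-measure}. The set $\{y_0 y_h > u\}$ is not relatively compact in $(0,\infty]\times[-\infty,\infty]^h$ because it extends down to $y_0 = 0$, so I truncate the event by $\{X_0 > \epsilon x\}$, apply vague convergence on the truncated piece, evaluate the limit via the homogeneity of $\nucond_h$, and then remove the truncation as $\epsilon \to 0$. The leftover small-$X_0$ tail is where condition~(\ref{eq:condition-epsilon-product-cev}) does its work.

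\textbf{Bulk contribution.} For $\epsilon, u > 0$, set $A_\epsilon = \{(y_0, y_1, \ldots, y_h) : y_0 > \epsilon,\, y_0 y_h > u\}$. This is relatively compact in $(0,\infty]\times[-\infty,\infty]^h$, and by restricting to a generic $\epsilon$ (and using that the homogeneity of Lemma~\ref{lem:homogeneity} will force $u \mapsto \nucond_h(\{y_0 y_h > u\})$ to be continuous) its topological boundary, lying on $\{y_0 = \epsilon\} \cup \{y_0 y_h = u\}$, has zero $\nucond_h$-mass. Assumption~\ref{hypo:conditional-indep-measure} then gives
\[
\lim_{x\to\infty} \frac{\pr(X_0 > \epsilon x,\, X_0 X_h > x b_h(x) u)}{\pr(X_0 > x)} = \nucond_h(A_\epsilon).
\]
Letting $\epsilon \downarrow 0$ through continuity values, $A_\epsilon \uparrow \{y_0 y_h > u\}$, a set of finite $\nucond_h$-mass by~(\ref{eq:condition-moment-product-cev}), so monotone convergence yields $\nucond_h(A_\epsilon) \uparrow \nucond_h(\{y_0 y_h > u\})$. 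Apply Lemma~\ref{lem:homogeneity} with $t = u^{1/(1+\kappa_h)}$: the scaling $(y_0, \ldots, y_h) \mapsto (ty_0, t^{\kappa_1} y_1, \ldots, t^{\kappa_h} y_h)$ sends $\{y_0 y_h > 1\}$ onto $\{y_0 y_h > t^{1+\kappa_h}\} = \{y_0 y_h > u\}$, so homogeneity of degree $-\alpha$ gives
\[
\nucond_h(\{y_0 y_h > u\}) = u^{-\alpha/(1+\kappa_h)}\, \nucond_h(\{y_0 y_h > 1\}),
\]
the desired right-hand side.

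\textbf{Negligible contribution and main obstacle.} It remains to show that the discarded truncation is negligible. Markov's inequality at exponent $\delta$ yields
\[
\pr(0 < X_0 \leq \epsilon x,\, X_0 X_h > x b_h(x) u) \leq u^{-\delta}\, \esp\!\left[\left(\frac{X_0}{x}\1{\{X_0 \leq \epsilon x\}}\frac{X_h}{b_h(x)}\right)^{\!\delta}\right],
\]
and condition~(\ref{eq:condition-epsilon-product-cev}) says precisely that the right-hand side, divided by $\pr(X_0 > x)$, has iterated $\limsup$ equal to zero as first $x \to \infty$ then $\epsilon \to 0$. A symmetric estimate absorbs any contribution from $X_0 \leq 0$ (which on $\{X_0 X_h > x b_h(x) u > 0\}$ forces $X_h \leq 0$ as well). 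The main obstacle here is conceptual rather than technical: because no independence is assumed between $X_0$ and $X_h$, the product $\esp[(X_0 X_h)^\delta]$ cannot be decoupled in Breiman fashion, so condition~(\ref{eq:condition-epsilon-product-cev}) is a purpose-built uniform-integrability substitute; its role is exactly to glue the three passages to the limit (vague convergence in $x$, monotonicity in $\epsilon$, and truncated moment control) into a single convergence statement.
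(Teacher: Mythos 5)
Your proof is correct and follows essentially the same route as the paper's: truncate at $\{X_0>\epsilon x\}$, apply vague convergence to the relatively compact bulk, control the remainder via Markov's inequality together with condition~(\ref{eq:condition-epsilon-product-cev}), and finish with the homogeneity of $\nucond_h$ (your direct scaling of the set $\{y_0y_h>1\}$ is the same computation as the paper's change of variables $y_0=u^{1/(1+\kappa_h)}z_0$, $y_i=u^{\kappa_i/(1+\kappa_h)}z_i$). Your added remarks on boundary mass and on the sign of $X_0$ are reasonable refinements of points the paper leaves implicit.
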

Thus, the right tail index of the product $X_0X_h$ is $\alpha/(1+\kappa_h)$.
\begin{proof}
  Fix some $\epsilon>0$. Then, by vague convergence,
  \begin{align*}
    \lim_{x\to\infty} \frac{\pr(X_0 \1{\{X_0 > \epsilon x\}} X_h >
      x\scalfunccev_h(x) u )}{\pr(X_0>x)} =
    \int_{(\epsilon,\infty]\times[0,\infty]^h} \1{\{y_0y_h>u\}}
    \nucond_h(\rmd\boldsymbol{y}) \; .
  \end{align*}
  and by Markov's inequality,
  \begin{align*}
    \frac{\pr(X_0 \1{\{X_0 \leq \epsilon x\}} X_h > x\scalfunccev_h(x) u )}
    {\pr(X_0>x)} \leq \frac{ \esp \left[ \left( \frac{X_0}{x} \1{\{X_0 \leq
            \epsilon x\}} \frac{X_h}{\scalfunccev_h(x)} \right)^\delta \right]}
    {u^\delta \pr(X_0 > x)} \; .
  \end{align*}
  Conditions~(\ref{eq:condition-moment-product-cev})
  and~(\ref{eq:condition-epsilon-product-cev}) ensure that
  \begin{align*}
    \lim_{x\to\infty} \frac{ \pr (X_0X_h > x\scalfunccev_h(x)u) } {\pr(X_0>x)} =
    \int_{[0,\infty]^{h+1}} \1{\{y_0y_h>u\}} \nucond_h(\rmd\boldsymbol{y}) \; .
  \end{align*}
  This yields~(\ref{eq:tail-product-cev}) by the homogeneity
  property~(\ref{eq:homogeneity-cev}) and the change of variable
  $y_0=u^{1/(1+\kappa_h)} z_0$, $y_i=u^{\kappa_i/(1+\kappa_h)}z_i$, $1 \leq i
  \leq h$.
\end{proof}

\subsection{Convergence of moments}
\label{sec:conv-moment}
The following lemma  states that under suitable moment assumptions, the
convergence~(\ref{eq:lim-cevcdf}) can be extended to unbounded functionals.
\begin{lemma}
  \label{lem:weakconv-unbounded}
  Let Assumption~\ref{hypo:conditional-indep-measure} hold. Assume moreover
  that there exists $x_0>0$ and $q_0,\dots,q_h>0$ such that
  \begin{align}
    \label{eq:condition-weakconv-unbounded1}
    \sup_{x\geq x_0} \esp \left[ \left|\frac{X_0}x\right|^{q_0} \prod_{i=1}^h
      \left| \frac{X_i}{b_i(x)} \right|^{q_i} \mid X_0>x \right] < \infty \; .
  \end{align}
  Let $g$ be a continuous function defined on $[1,\infty)\times\Rset^h$ such
  that
\begin{align}
    \label{eq:condition-weakconv-unbounded2}
  |g(x_0,\dots,x_h)| \leq C\prod_{i=0}^h (|x_i|\vee1)^{q_i'} \; ,
\end{align}
for some $q_i'<q_i$, $0\leq i \leq h$ and a positive constant $C$.  Then
\begin{align}
  \label{eq:convweak-unbounded}
  \lim_{x\to\infty} \esp\left[
    g\left(\frac{X_0}{x},\frac{X_1}{b_1(x)},\dots,\frac{X_h}{\scalfunccev_h(x)}\right)
    \mid X_0>x\right] = \int_1^\infty \int_{\Rset^h} g(\boldsymbol{y})
  \nucond_h(\rmd\boldsymbol{y}) \; .
\end{align}
\end{lemma}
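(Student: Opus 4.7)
The plan is a standard weak-convergence plus uniform-integrability argument, in which the strict slack $q_i'<q_i$ supplies the room needed for the truncation error.

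First, I would observe that Assumption~\ref{hypo:conditional-indep-measure}, together with the fact recorded in the bullet points above that $\cevcdf_h$ restricted to $[1,\infty)\times\Rset^h$ is a probability measure, translates the vague convergence into weak convergence of the conditional distribution: writing $\tilde Z_x := (X_0/x,X_1/\scalfunccev_1(x),\ldots,X_h/\scalfunccev_h(x))$, we have, for every bounded continuous $f$ on $[1,\infty)\times\Rset^h$,
$$\lim_{x\to\infty}\esp[f(\tilde Z_x)\mid X_0>x]=\int f\,\rmd\cevcdf_h.$$

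Second, I would approximate $g$ by a compactly supported truncation. Let $\phi_M:\Rset^{h+1}\to[0,1]$ be continuous with $\phi_M\equiv 1$ on $[-M,M]^{h+1}$ and $\phi_M\equiv 0$ outside $[-2M,2M]^{h+1}$. For each fixed $M$, the function $g\phi_M$ is bounded continuous, so the previous step yields
$$\lim_{x\to\infty}\esp[(g\phi_M)(\tilde Z_x)\mid X_0>x]=\int g\phi_M\,\rmd\cevcdf_h.$$
Applying Fatou's lemma to \eqref{eq:condition-weakconv-unbounded1} then gives integrability of $\prod_i(|y_i|\vee 1)^{q_i}$ against $\cevcdf_h$, which dominates $|g|$ via \eqref{eq:condition-weakconv-unbounded2}; hence $\int g\phi_M\,\rmd\cevcdf_h\to\int g\,\rmd\cevcdf_h$ by dominated convergence as $M\to\infty$.

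The decisive step is to bound the truncation error uniformly in $x$:
$$\lim_{M\to\infty}\limsup_{x\to\infty}\esp\bigl[|g(\tilde Z_x)|(1-\phi_M(\tilde Z_x))\bigm|X_0>x\bigr]=0.$$
The growth bound \eqref{eq:condition-weakconv-unbounded2} combined with $1-\phi_M(\tilde Z_x)\leq\sum_{j=0}^h\1{\{|\tilde Z_{x,j}|>M\}}$ reduces this to bounding, for each $j$,
$$\esp\Bigl[\prod_i(|\tilde Z_{x,i}|\vee 1)^{q_i'}\,\1{\{|\tilde Z_{x,j}|>M\}}\Bigm|X_0>x\Bigr].$$
The key trick, on the event $\{|\tilde Z_{x,j}|>M\geq 1\}$, is to exploit the strict slack $q_j'<q_j$ and replace $(|\tilde Z_{x,j}|\vee 1)^{q_j'}\leq M^{q_j'-q_j}|\tilde Z_{x,j}|^{q_j}$. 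The remaining factors $(|\tilde Z_{x,i}|\vee 1)^{q_i'}$ with $i\neq j$ are then absorbed via Hölder's inequality, the exponents being chosen using the slacks $q_i'<q_i$ so that the resulting expectation reduces to a power of the bounded joint moment \eqref{eq:condition-weakconv-unbounded1}. The prefactor $M^{q_j'-q_j}\to 0$ as $M\to\infty$ closes the estimate.

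The main obstacle I foresee is precisely this Hölder calibration: since \eqref{eq:condition-weakconv-unbounded1} controls only a single joint moment and not the individual marginal moments, the Hölder exponents must be balanced across all coordinates simultaneously rather than picked coordinatewise. The strict inequalities $q_i'<q_i$ provide just enough room, but the bookkeeping required to distribute this slack among the coordinates, together with the need to keep using the factor $|\tilde Z_{x,0}|^{q_0}\geq 1$ to absorb small or subunit values of $|\tilde Z_{x,i}|$ for $i\geq 1$, is the technical heart of the argument.
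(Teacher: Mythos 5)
Your argument is essentially the paper's own proof: the paper recasts the conditional laws as probability measures $\nucond_{h,x}$ on $[1,\infty)\times\Rset^h$ converging weakly to $\nucond_h$, and then concludes by asserting that~\eqref{eq:condition-weakconv-unbounded1} yields uniform integrability of $g$ under these laws, exactly the weak-convergence-plus-uniform-integrability scheme you describe. Your truncation and H\"older elaboration of the uniform-integrability step is in fact more detailed than what the paper provides (which disposes of it in one sentence), so nothing is missing relative to the published argument.
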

\begin{proof}
  Let $\nucond_{h,x}$ be the measure defined on $(0,\infty)\times\Rset^h$ by
  \begin{align}
   \label{eq:def-numultx}
    \nucond_{h,x}(\cdot) = \frac{1} {\pr(X_0>x)} \pr \left( \left(
        \frac{X_0}x,\frac{X_1}{b_1(x)},\dots,\frac{X_h}{\scalfunccev_h(x)}\right) \in \cdot
    \right) \; .
  \end{align}
  Then, we have
  \begin{align*}
    \esp \left[ g\left( \frac{X_0}x, \frac{X_1}{b_1(x)},
        \dots,\frac{X_h}{\scalfunccev_h(x)} \right) \mid X_0 > x \right] = \int_1^\infty
    \int_{\Rset^h} g(\boldsymbol{y}) \nucond_{h,x}(\rmd \boldsymbol{y}) \; .
  \end{align*}
  Note that $\nucond_{h,x}$ is a probability measure on $[1,\infty)\times\Rset^h$
  which converges weakly to $\nucond_h$. Let $\boldsymbol{Y}_{h,x}$ be a sequence of
  random variables with distribution $\nucond_{h,x}$. Then $\boldsymbol{Y}_{h,x}$ converges
  weakly to a random variable $\boldsymbol{Y}_h$ with distribution
  $\nucond_h$. Therefore, the convergence~(\ref{eq:convweak-unbounded}) holds
  for all bounded and continuous function $g$. If $g$ is unbounded and
  satisfies~(\ref{eq:condition-weakconv-unbounded2}), then
  (\ref{eq:condition-weakconv-unbounded1}) ensures the uniform integrability of
  the sequence $g(\boldsymbol{Y}_{h,x})$ and thus $\lim_{x\to\infty}
  \esp[g(\boldsymbol{Y}_{h,x})]=\esp[g(\boldsymbol{Y}_h)]$.
\end{proof}

\begin{remark}
  \label{rem:conv-moment-indep}
  Condition~(\ref{eq:condition-weakconv-unbounded1}) ensures the uniform
  integrability needed to obtain the convergence of the expectation
  in~(\ref{eq:convweak-unbounded}). In the case of extremal dependence, it is
  necessary that $q_0+\cdots+q_h \leq \alpha$
  for~(\ref{eq:condition-weakconv-unbounded1}) to hold. In the case of extremal
  independence, this is in general neither sufficient nor necessary. For each
  model, the range of the admissible exponents $q_i$ must then be given.
\end{remark}

\subsection{Conditional Tail Expectation}
\label{sec:CTE}
Assumption~\ref{hypo:conditional-indep-measure} and
Lemma~\ref{lem:weakconv-unbounded} can be applied to study certain risk
measures. In a time series context, we may be interested in the limiting
behavior as $x\to\infty$ of the Conditional Tail Expectation (CTE), defined by
\begin{align*}
  \CTE_h(x) = \esp[X_h \mid X_0>x] \; .
\end{align*}
This quantity is related to the expected shortfall (ES), defined by
\begin{align*}
  \ES_h(u) = \esp[X_h \mid X_0>\VAR_{X_0}(u)] \; ,
\end{align*}
where $\VAR_{X_0}(u)$ is the Value-at-Risk associated with the random variable
$X_0$, at the level~$u$.
Note that the expected shortfall (originally defined with $h=0$) is a coherent
risk measure in the sense of \cite{artzner:delbaen:eber:heath:1999}.  The
previous quantities could be zero. In a risk measure context, one might rather
be interested in $\CTE_h^+(x) = \esp[(X_h)_+ \mid X_0>x]$ where $(X_h)_+$
represent the future losses in absolute values.

If for some $h>0$ the vector $(X_0,X_h)$ is extremally dependent and if
$\alpha>1$, then $\CTE_h^+(x)$ will grow linearly with~$x$,
i.e.~$\lim_{x\to\infty} x^{-1} \CTE_h^+(x) > 0$.  For a large class of regularly
varying sequences (e.g.~stationary solutions of stochastic recurrence
equations), all the bivariate marginal distributions of the pairs $(X_0,X_h)$
are extremally dependent. This means that a large value of $X_0$ yields the same
order of magnitude of the $\CTE_h^+$ for {\bf all} lags~$h$. This may not seem
reasonable for many real data sets, e.g.~for high frequency financial data. In
the case of extremal independence, under
Assumption~\ref{hypo:conditional-indep-measure}, if there exists $\epsilon>0$
such that
\begin{align}
  \label{eq:moment-condition-CTE-indep}
  \sup_{x\geq x_0} \frac{ \esp[ |b_h^{-1}(x) X_h|^{1+\epsilon} \1{\{X_0>x\}}]}
  {\pr(X_0>x)} < \infty \; ,
\end{align}
then $\lim_{x\to\infty} x^{-1} \CTE_h^+(x) = 0$. Again, this does not mean that the CTE is
uninformative, but that a smaller normalization is needed in order to obtain a non trivial limit.
Lemma~\ref{lem:weakconv-unbounded} implies that we can define
\begin{align}
    m_h = \int_1^\infty \int_{\Rset^h} (y_h)_+ \, \cevcdf_h(\rmd y_0,\dots,\rmd
    y_h) \; ,   \label{eq:def-mh}
\end{align}
and we have $\CTE_h^+(x) \sim \scalfunccev_h(x) m_h$.

\subsection{The tail process}
\label{sec:tail-process}
In \cite{basrak:segers:2009} the authors define the tail process as the distributional limit of the
sequence $X_0/x,X_1/x,\dots,X_h/x,\dots$ conditionally on $X_0>x$. In the case of extremal
independence, this $X_t/x$ converges weakly to 0 for all $t>0$. Our approach
suggests the following definition which includes the ordinary tail process.
\begin{definition}
  Assume that Assumption \ref{hypo:conditional-indep-measure} holds.  The tail
  process $\{Y_t\}$ is the distributional limit of the sequence
\begin{align*}
\frac{X_0}{x},\frac{X_1}{b_1(x)},\dots,\frac{X_h}{\scalfunccev_h(x)},\dots
\end{align*}
conditionally on $X_0>x$.
\end{definition}
Note that the distribution of $(Y_0,\dots,Y_h)$ is $\cevcdf_h$.  We now give a
representation of the tail process.  Define the measure $G_h$ on $\Rset^h$ by
\begin{align*}
  G_h(y_1,\dots,y_h) = \int_1^\infty \int_{-\infty}^{y_1}
  \cdots\int_{-\infty}^{y_h} \nucond_h(\rmd u_0,u_0^{\kappa_1}\rmd
  u_1,\dots,u_0^{\kappa_h} \rmd u_h) \; .
\end{align*}
Then, using the homogeneity property~(\ref{eq:homogeneity-cev}), we obtain
\begin{align*}
  \int_{y_0}^\infty \int_{-\infty}^{y_1} \cdots\int_{-\infty}^{y_h}
  \nucond_h(\rmd u_0,u_0^{\kappa_1} \rmd u_1,\dots,u_0^{\kappa_h} \rmd u_h) =
  y_0^{-\alpha} G_h(y_1,\dots,y_h) \; .
\end{align*}
Let $(J_1,\dots,J_h)$ be a random vector with distribution $G_h$. Assume that
$(Y_0,\dots,Y_h)$ and $(J_1,\dots,J_h)$ are defined on the same probability
space and such that $Y_0$ and $(J_1,\dots,J_h)$ are independent. Note that
$(J_1,\dots,J_h)$ need not be independent. Then the previous identity yields
that
\begin{align*}
  (Y_0,\dots,Y_h) \eqdistr (Y_0,Y_0^{\kappa_1}J_1,\dots,Y_0^{\kappa_h}J_h) \; .
\end{align*}

We can use the tail process to interpret the moment
condition~(\ref{eq:condition-moment-product-cev}) in
Proposition~\ref{prop:tail-product-cev} which can now be expressed as
\begin{align}
  \label{eq:condition-moment-product-cev-2}
  \esp[J_h^{\alpha/(1+\kappa_h)}] < \infty \; .
\end{align}
The limit distribution of $X_0X_h$ given $X_0>x$ is thus the distribution of
$Y_0^{1+\kappa_h} J_h$. The independence of $J_h$ and $Y_0$ and
Condition~(\ref{eq:condition-epsilon-product-cev}) imply that the tail of
$Y_0^{1+\kappa_h} J_h$ is $\alpha/(1+\kappa_h)$ and that the tail index of
$X_0X_h$ is the same as that of $Y_0^{1+\kappa_h} J_h$.

\subsection{Comparison with Hidden Regular Variation}
\label{sec:hrv}
A different way to quantify the joint extremal behavior of extremally
independent distributions is Hidden Regular Variation (HRV), introduced in
\cite{resnick:2002}.  To simplify the notation, we will discuss hidden regular
variation for non negative random variables only.
Let $\mbc_{h+1}^2$ be the subset of $[0,\infty]^{h+1}$ comprised of vectors with
at least two positive components (denoted $\mathbb{E}^0$ in\cite{resnick:2002});
that is, $\mbc_{h+1}^2$ is $[0,\infty]^{2}$ with the axes removed. For $h=1$,
$\mbc_2^2=(0,\infty]^2$; for $h=2$,
\begin{align*}
  \mbc_3^2 = \left((0,\infty]^2\times [0,\infty]\right) \cup \left(
    (0,\infty]\times[0,\infty]\times(0,\infty]\right) \cup
  \left([0,\infty]\times(0,\infty]^2 \right) \; .
\end{align*}
A vector $(X_0,\dots,X_h)$ satisfying~(\ref{eq:def-mrv}) is said to have Hidden
Regular Variation if there exists a scaling function $d$ such that
$\lim_{s\to\infty} c(s)/d(s)=\infty$ and the sequence of measures $s \pr
(d^{-1}(s) (X_0,\dots,X_h) \in \cdot)$ converges vaguely to a non zero Radon
measure on $\mbc_{h+1}^2$. Under suitable non degeneracy conditions, the
function $d$ must then be regularly varying with some index $\beta\geq\alpha$.
HRV implies extremal independence because of the condition $c(s)/d(s)\to\infty$
but HRV does not imply the existence of a limiting conditional distribution. See
e.g.~\cite[Section~6]{heffernan:resnick:2007}
or~\cite[Example~4]{das:resnick:2011}.
Conversely, it is conjectured but not proved in~\cite{das:resnick:2011} that
extremal independence and existence of a conditional limit law implies HRV.

Let us now highlight the differences between these two concepts.  The fundamental theoretical
difference between HRV and CEV lies in the space where vague convergence holds. This difference
entails the following one: HRV only deals with joint exceedances of two components of the vector,
and in dimension greater than two, the hidden exponent measure may be concentrated on hyperplanes;
this is the case for instance of a vector with three i.i.d.~regularly varying components. The CEV
Assumption~\ref{hypo:conditional-indep-measure} prevents such a degeneracy.  For example, if $X,Y,Z$
are i.i.d.~non negative regularly varying random variables with tail index $\alpha$, then $(X,Y,Z)$
has HRV with $\beta=2\alpha$ but, for $u,v,w>0$,
\begin{align*}
  \lim_{x\to\infty} \frac{ \pr(X>xu,Y>xv,Z>xw) }{\pr^2(X>x)} & = 0 \: , \\
  \lim_{x\to\infty} \frac{ \pr(X>xu,Y>v,Z>w) }{\pr(X>x)} & = u^{-\alpha}
  \pr(Y>v)\pr(Z>w) \: .
\end{align*}
Therefore HRV is uninformative for such exceedances but CEV yields a non
degenerate limit.

The CEV assumption is also more flexible than HRV since it can also accommodate
extremally dependent vectors (ruled out by HRV) with extremally independent
subvectors. For example, we already seen that if $X$ and $Y$ are i.i.d.~non
negative regularly varying random variables with tail index $\alpha$, then
$(X,X,Y)$ is extremally dependent; however, for~$u,u',v>0$,
\begin{align*}
  \lim_{x\to\infty} \frac{ \pr(X>xu,X>xu',Y>xv) }{\pr(X>x)} & = 0 \: , \\
  \lim_{x\to\infty} \frac{ \pr(X>xu,X>xu',Y>v) }{\pr(X>x)} & = (u\vee
  u')^{-\alpha} \pr(Y>v) \: .
\end{align*}
The first limit is obtained by the standard multivariate regular variation
property and is zero even though the vector is extremally dependent, and the
second one shows that the CEV assumption is fulfilled.

In conclusion, we can say that the practical purposes of HRV and CEV are
different: HRV gives an approximation of the probability of exceedances of pairs
of components of an extremally independent vector over a ``not too extreme''
level, whereas CEV quantifies the influence of an extreme component (an extreme
event at time zero in a time series context) on all other components (the future
observations), be they extremally independent or dependent.  There is no
implication or exclusion between HRV and CEV and one approach cannot be deemed
superior to the other. In higher dimensions, the CEV approach seems more
flexible.

\subsection{A counter example}
\label{sec:counter}
We now give an example, where the conditional laws do not exists. Consider a
stationary standard Gaussian process $\sequence{\xi}{t}{\Nset}$ and define $X_t
= \rme^{c\xi_t^2}$, with $c<1/2$. Assume moreover that $|\cov(\xi_0,\xi_n)|<1$
for all $n\geq1$. This is not a stringent assumption since a sufficient
condition is that the process $\{\xi_t\}$ has a spectral density $f$ such that
$\int_{-\pi}^\pi f(t) \,\rmd t=1$. In that case, extremal independence holds for
the bivariate distributions, but a non trivial limiting conditional distribution
$\rme^{c\xi_h^2}$ given $\rme^{c\xi_0^2} > x$ does not exist.  See
\cite[section~2.4]{heffernan:resnick:2007}.

\section{Extremally independent Markov chains}
\label{sec:markov-chains}

The extremal properties of Markov chains have received considerable attention
recently; see \cite{janssen:segers:2013}, \cite{resnick:zeber:2013:asy} and the
references therein.  The aforementioned papers deal with extremal dependence. In
this section, we will extend some results of \cite{resnick:zeber:2013:asy} to
the present context which allows for extremal independence.
Since the distribution of a Markov chain is entirely determined by its initial distribution and the
transition kernel, denoted by $\Pi$, it is natural in this context to replace
Assumption~\ref{hypo:conditional-indep-measure} by the following one, which is similar
to~\cite[Assumption~2.5]{resnick:zeber:2013:asy}.  For simplicity, we assume that the state space
is~$[0,\infty)$.

\begin{hypothesis}
  \label{hypo:markov-kernel}
  There exist a function $b$, regularly varying at infinity with index $\kappa\geq0$
  and a distribution function $G$ on $[0,\infty)$, not concentrated on one point such
  that
\begin{align}
  \label{eq:kernel}
  \lim_{x\to\infty} \Pi(x,b(x)A) = G(A)
\end{align}
for all Borel sets $A \subset [0,\infty)$ such that $G(\partial A)=0$.
\end{hypothesis}
This means that the transition kernel is asymptotically homogeneous. It also
means that conditionally on $X_0=x$, the distribution of $X_1/b(x)$ converges
weakly to the distribution~$G$.

The main result of this section states that Assumption~\ref{hypo:markov-kernel}
implies Assumption~\ref{hypo:conditional-indep-measure}.  Define $b_0(x)=x$,
$b_1(x)=b(x)$ and for $h\geq1$, $\scalfunccev_h = b_{h-1}\circ b$.
\begin{theorem}
  \label{theo:markov-indep}
  Let $\{X_t\}$ be a Markov chain whose transition kernel satisfies
  Assumption~\ref{hypo:markov-kernel} and with initial distribution having right
  tail index $\alpha>0$.  Assume moreover that~\mbox{$G(\{0\})=0$}. Then
  Assumption~\ref{hypo:conditional-indep-measure} holds and the limiting
  conditional distribution of
  \begin{align*}
    \left(\frac{X_0}x,\frac{X_1}{b_1(x)},
      \dots,\frac{X_h}{\scalfunccev_h(x)},\dots\right)
  \end{align*}
  given $X_0>x$ when $x\to\infty$ is the distribution of the exponential AR(1)
  process $\{Y_t,t\geq0\}$ defined by $Y_t=Y_{t-1}^\kappa W_t$ where $\{W_t\}$
  is an i.i.d. sequence with distribution $G$, independent of the standard Pareto
  random variable $Y_0$ with tail index~$\alpha$.
\end{theorem}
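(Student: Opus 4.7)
The plan is to proceed by induction on $h$, using the Markov property to decouple $X_h$ from the past via conditioning on $X_{h-1}$, combined with the regular variation of index $\kappa$ of the function $b$. The base case $h=0$ is immediate: since $X_0$ has right tail index $\alpha>0$, the conditional law of $X_0/x$ given $X_0>x$ converges weakly to a standard Pareto variable $Y_0$ with tail index $\alpha$.

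For the inductive step, I assume that, conditionally on $X_0>x$, the joint weak convergence
\begin{align*}
\left(\frac{X_0}{x},\frac{X_1}{b_1(x)},\ldots,\frac{X_{h-1}}{b_{h-1}(x)}\right) \stackrel{d}{\longrightarrow} (Y_0,Y_1,\ldots,Y_{h-1})
\end{align*}
has been established, with $Y_j=Y_{j-1}^\kappa W_j$ and $W_1,\ldots,W_{h-1}$ i.i.d.\ $G$, independent of $Y_0$. Since $b_h=b\circ b_{h-1}$, I decompose
\begin{align*}
\frac{X_h}{b_h(x)} = \frac{X_h}{b(X_{h-1})}\cdot\frac{b(X_{h-1})}{b(b_{h-1}(x))}.
\end{align*}
Assumption~\ref{hypo:markov-kernel} says that conditionally on $X_{h-1}=z$ with $z\to\infty$, $X_h/b(z)$ is asymptotically distributed as $W_h\sim G$, independently of the past. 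By the inductive hypothesis $X_{h-1}/b_{h-1}(x)\to Y_{h-1}$, and $Y_{h-1}>0$ almost surely because $G(\{0\})=0$ forces inductively $Y_j>0$, so $X_{h-1}\to\infty$ in probability under the conditioning $X_0>x$; the uniform convergence theorem for regularly varying functions then gives $b(X_{h-1})/b(b_{h-1}(x))\to Y_{h-1}^\kappa$ in distribution. Multiplying and applying the continuous mapping theorem yields $X_h/b_h(x)\to Y_{h-1}^\kappa W_h=Y_h$, jointly with the previous coordinates and with $W_h$ independent of $(Y_0,\ldots,Y_{h-1})$; this closes the induction.

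The main technical obstacle is to upgrade the pointwise kernel convergence of Assumption~\ref{hypo:markov-kernel} into a joint weak limit under the conditioning $X_0>x$. I handle it at each step by taking a bounded continuous test function $f$ on $(0,\infty]\times[-\infty,\infty]^h$ whose support is bounded away from zero in the first coordinate, expressing
\begin{align*}
\esp\left[f\left(\frac{X_0}{x},\ldots,\frac{X_h}{b_h(x)}\right) \mid X_0>x\right]
\end{align*}
via successive conditional expectations using the Markov property, and then combining the Portmanteau characterization of the kernel convergence with Potter's bounds for $b$ to pass to the limit. The three non-degeneracy conditions (a)--(c) in Assumption~\ref{hypo:conditional-indep-measure} are then read off from the exponential AR(1) representation of $(Y_0,\ldots,Y_h)$: (c) holds because $X_0$ is heavy tailed, while (a)--(b) follow because $G$ is not concentrated at a single point, so the conditional law of $(Y_1,\ldots,Y_h)$ given $Y_0$ is supported neither on a line through infinity nor on a lower-dimensional hyperplane.
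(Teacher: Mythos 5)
Your proposal is correct and follows essentially the same route as the paper: induction on $h$, the Markov property to reduce to one-step kernel convergence, the uniform convergence theorem for regularly varying functions to handle $b(X_{h-1})/b(b_{h-1}(x))$, and the hypothesis $G(\{0\})=0$ to keep $Y_{h-1}$ away from zero. The one step you describe only in outline --- upgrading the pointwise kernel convergence to a joint limit when the conditioning value $X_{h-1}$ is random and $x$-dependent --- is exactly what the paper isolates as its ``second continuous mapping theorem'' and the accompanying kernel lemma (uniform-on-compacts convergence of $u\mapsto\Pi(xu,b(x)\,\cdot\,)$ applied to $x$-dependent test functions), so your plan matches the paper's in substance.
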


The proof is in Section~\ref{sec:prooftheomarkov}.  For a Markov chain, the tail
process is called the tail chain.  With the
normalization used here, we obtain a new type of tail chain which is an
exponential AR(1) process. In the case of extremal dependence, the usual tail
chain is an exponential random walk. This corresponds to the case $\kappa_j=1$
for all $j$.

Since a Markov chain $\{X_t\}$ can always be expressed as $X_{t+1}=\Phi(X_t,\epsilon_{t+1})$, where
$\{\epsilon_t,t\geq1\}$ is an i.i.d.~sequence (the innovations), independent of $X_0$,
Condition~(\ref{eq:kernel}) is equivalent to the weak convergence of $b^{-1}(x)\Phi(x,\epsilon_0)$
to the distribution $G$ in~(\ref{eq:kernel}). This is the framework considered in
\cite{janssen:segers:2013} under the additional assumption that
$b_j(x)=x$ for all $j$.

If $G(\{0\})>0$, then Theorem~\ref{theo:markov-indep} may no longer be
true. However, without this condition, it can be seen from the proof that the
convergence still holds provided $X_1,\dots,X_{h-1}$ are separated from zero,
i.e.
\begin{multline}
  \label{eq:markov-tail-expar-truncated}
  \lim_{x\to\infty} \pr \left( \frac{X_0}x \leq y_0, \epsilon \leq
    \frac{X_1}{b_1(x)} \leq y_1, \dots,\epsilon \leq \frac{X_{h-1}}{b_{h-1}(x)}
    \leq y_{h-1}, \frac{X_h}{b_h(x)} \leq y_h \mid X_0 > x \right) \\
  = \pr(Y_0 \leq y_0, \epsilon \leq Y_1 \leq y_1,\dots, \epsilon \leq Y_{h-1}
  \leq y_{h-1}, Y_h \leq y_h) \;,
\end{multline}
with $\{Y_j\}$ is the tail process as described in Theorem~\ref{theo:markov-indep}.

In the extremally dependent case (where $b_j(x)=x$ for all $x$), the convergence
still holds under an additional regularity condition, see
\cite[Proposition~5.1]{resnick:zeber:2013:asy}. It would be possible to
generalize this condition to the extremally independent context, but as in the
extremally dependent case, this condition would not be necessary for the
convergence to holds. Therefore we do not pursue in this direction.  Note
finally that if $G(\{0\})>0$ then the tail process is identically zero after a
geometric time with mean $1/G(\{0\})$.

We now give examples of Markov chains satisfying condition~(\ref{eq:kernel}).
\subsection{Exponential AR(1)}
\label{sec:expar}
Let the time series $\{V_t\}$ be defined by
$V_{t}=\rme^{\xi_{t}}$ with
\begin{align}
  \label{eq:ar1}
  \xi_{t} = \phi \xi_{t-1}+\epsilon_{t} \; ,
\end{align}
where $0 \leq \phi<1$ and $\sequence{\epsilon}{t}{\Zset}$ is an i.i.d.~sequence
such that $\esp[\epsilon_0] = 0$ and
\begin{align}
  \label{eq:expepsilon-rv}
  \pr(\rme^{\epsilon_0} > x) = x^{-\alpha}\ell(x)  \; ,
\end{align}
for some $\alpha>0$ and a slowly varying function $\ell$. In other words,
$\rme^{\epsilon_0}$ has a regularly varying right tail with index $\alpha$.
\cite[Section~3]{mikosch:rezapour:2013} studied the regular variation and proved the
extremal independence of this model.  Let $\xi_{t} = \sum_{j=0}^\infty \phi^j
\epsilon_{t-j}$ be the stationary solution of the AR(1) equation~(\ref{eq:ar1}).
Condition~(\ref{eq:expepsilon-rv}) implies
that for $\phi\in[0,1)$ and  $j\geq1$,
\begin{align*}
  \esp[\rme^{\alpha\phi^j \epsilon_0}] < \infty \; .
\end{align*}
Thus, applying Breiman's lemma, we have
\begin{align*}
  \pr(V_t>x)= \pr(\rme^{\xi_t}>x) & = \pr(\rme^{\epsilon_t}
  \rme^{\sum_{j=1}^\infty \phi^j \epsilon_{t-j}}>x) \\
  & \sim \pr(\rme^{\epsilon_0}>x) \esp \left[ \rme^{\alpha\sum_{j=1}^\infty
      \phi^j \epsilon_{t-j}} \right] =  \pr(\rme^{\epsilon_0}>x)  \prod_{j=1}^\infty \esp\left[
    \rme^{\alpha \phi^j \epsilon_{t-j}} \right] \; .
\end{align*}
That is, $V_t$ has a regularly varying right tail and is tail equivalent to
$\rme^{\epsilon_0}$.  The Exponential AR(1) satisfies the equation
\begin{align}
  \label{eq:expar-recurrence}
  V_{t+1}=V_t^{\phi}\rme^{\epsilon_{t+1}} \; .
\end{align}
This corresponds to the functional representation $\Phi(x,\epsilon) =
x^\phi\epsilon$.  We have
\begin{align*}
  \Pi(x,A) = \pr (\rme^{\epsilon_0} \in x^{-\phi}A) \; ,
\end{align*}
and thus, with $G$ the distribution of $\rme^{\epsilon_0}$, we have
\begin{align*}
  \Pi(x,x^\phi A) = G(A) \; .
\end{align*}
Since $G(\{0\})=0$, Theorem \ref{theo:markov-indep} is applicable. The tail
chain is a non stationary exponential AR(1) process $\{Y_t\}$ defined by
$Y_t=Y_{t-1}^\phi \rme^{\epsilon_t}$ and $Y_0$ is a standard Pareto random
variable.

For
$(y_0,\dots,y_h)\in[1,\infty)\times\Rset^h$, we have
\begin{multline}
  \label{eq:jointlimdist-expar1}
  \lim_{x\to\infty}
  \pr( V_0 \leq xy_0,V_1  \leq x^{\phi} y_1,\dots,V_h \leq  x^{\phi^h} y_h \mid V_0 > x) \\
  = \int_1^{y_0} \pr( \rme^{\xi_{0,1}} \leq v^{-\phi} y_1, \dots,
  \rme^{\xi_{0,h}} \leq v^{-\phi^h} y_h ) \, \alpha v^{-\alpha-1} \rmd v \; ,
\end{multline}
where for $h\geq1$, $\xi_{0,h}=\sum_{j=0}^{h-1}\phi^j \epsilon_{h-j}$.  The
limiting conditional distribution of $V_h$ given $V_0>x$ is thus
\begin{align*}
  \lim_{x\to\infty} \pr( V_h \leq x^{\phi^h} y \mid V_0 > x) = \int_{1}^\infty
  \pr( \rme^{\xi_{0,h}} \leq v^{-\phi^h} y ) \, \alpha v^{-\alpha-1} \rmd v \; .
\end{align*}
The conditional scaling exponent is $\kappa_h=\phi^h$.  We also note
that since $\kappa_h\in(0,1)$, this distribution is tail equivalent
to the distribution of $\rme^{\epsilon_0}$, i.e.~we have as $y\to\infty$,
\begin{align*}
  \pr( V_h > x^{\phi^h} y \mid V_0 > x) & = \int_1^\infty \pr( \rme^{\xi_{0,h}}
  > v^{-\kappa_h} y ) \, \alpha v^{-\alpha-1} \rmd v \\
  & \sim \pr(\rme^{\xi_{0,h}} >y) \int_1^\infty v^{\alpha\kappa_h} \alpha
  v^{-\alpha-1} \, \rmd v \\
  & = \frac{ \pr (\rme^{\xi_{0,h}} >y) } {1-\kappa_h} \sim \frac{\pr
    (\rme^{\xi_{0}} >y)} {(1-\kappa_h)\esp[\rme^{\alpha \kappa_h \xi_0}] } \; .
\end{align*}
If $\alpha>1$, we can apply Lemma~\ref{lem:weakconv-unbounded} and we
obtain
\begin{align*}
  \lim_{x\to\infty} \esp\left[ \frac{V_h}{x^{\kappa_h}} \mid V_0>x \right] =
  \frac{\alpha\esp[\rme^{\xi_{0,h}}]} {\alpha-\kappa_h} = \frac{\alpha\esp[V_0]}
  {(\alpha-\kappa_h)\esp[V_0^{\kappa_h}]} \; .
\end{align*}

\paragraph{Tail of $V_0V_h$.}
The recursion~(\ref{eq:ar1}) yields $V_0V_h = V_0^{1+\phi^h} \rme^{\sum_{j=0}^{h-1} \phi^j
  \epsilon_h-j}$ and the series in the exponential is independent of $V_0$. Also,
$\esp[\rme^{\alpha(1+\phi^h)^{-1}\sum_{j=0}^{h-1} \phi^j \epsilon_h-j}]<\infty$ and by Breiman's
Lemma, we obtain directly that the tail index of $V_0V_h$ is $\alpha/(1+\phi^h)$. We can also check
that Condition~(\ref{eq:condition-epsilon-product-cev}) holds.  Fix $\delta<\alpha$ such that
$\delta(1+\phi^h)>\alpha$ and define $\scalfunccev_h(x)=x^{\phi^h}$. Then, for some constant $C>0$,
\begin{align*}
  \esp \left[ \left( \frac{V_0}{x} \1{\{V_0 \leq \epsilon x\}} \frac{V_h}{\scalfunccev_h(x)}
    \right)^\delta \right] = \esp\left[ \frac{V_0^{\delta(1+\phi^h)}}{x^{\delta(1+\phi^h)}} \1{\{V_0
      \leq \epsilon x\}} \right] \esp \left[ \rme^{\delta\sum_{j=0}^{h-1}
  \phi^j \epsilon_h-j} \right] \leq C \epsilon^{\delta(1+\phi^h)}
  \pr(\rme^{\epsilon_0}>\epsilon x) \; .
\end{align*}
This yields
\begin{align*}
  \limsup_{x\to\infty} \frac{1}{\pr (\rme^{\epsilon_0}>x)} \esp \left[ \left( \frac{V_0}{x} \1{\{V_0 \leq
        \epsilon x\}} \frac{V_h}{\scalfunccev_h(x)} \right)^\delta \right] \leq C \epsilon^{\delta(1+\phi^h)-\alpha}\; .
\end{align*}
By the choice of $\delta$, this yields the negligibility
condition~(\ref{eq:condition-epsilon-product-cev}).

\paragraph{Convergence of moments} Using the same decomposition as above, we obtain that the moment
condition~(\ref{eq:condition-weakconv-unbounded1}) holds if, for $i=0,\dots,h$,
\begin{align*}
  \sum_{j=0}^i \phi^j q_{h-j} < \alpha \; .
\end{align*}
This implies in particular that $q_j<\alpha$ for all $j=0,\dots,h$. 
\paragraph{Explosive case.}
Consider now the case $\phi>1$. If the exponential AR(1) model is defined by the
recurrence equation (\ref{eq:expar-recurrence}): $V_{t+1} =
V_t^\phi\rme^{\epsilon_{t+1}}$ with $\{\epsilon_t,t\geq1\}$ independent of
$V_0$, then the limit (\ref{eq:jointlimdist-expar1}) still holds, but a
stationary measure for this Markov chain does not exist.

On the other hand, the stationary (non-causal and non-Markovian) solution of
Equation~(\ref{eq:ar1}) is given by $\xi_t = \sum_{j=1}^\infty (-\phi)^{-j}
\epsilon_{t+j}$. Then the sequence $\{\rme^{\xi_t}\}$ is stationary and
regularly varying but the tail index is now $\alpha\phi$ and no conditional
limiting distribution exist.

Finally, it is obvious that the time-reversed chain has an invariant measure and
the limiting conditional distribution $\pr(V_0 < x^{1/\phi}y_1\mid V_1>x)$
exists.

\subsection{Switching exponential AR(1)}
\label{sec:switchingAR}

Let $\{U_t\}$ be an i.i.d. sequence with uniform marginal distribution on
$[0,1]$, and let $\{R_t\}$ be an i.i.d. sequence with marginal distribution
$F_R$ concentrated on $[0,\infty)$, independent of the sequence $\{U_t\}$. Let
$\phi>0$, $k:[0,\infty)\to[0,1]$ be a measurable function and define a Markov
chain $\{X_t\}$ by $X_0$ and
\begin{align*}
  X_{t+1} = R_{t+1} ( X_t^\phi \1{\{k(X_t)\leq U_{t+1}\}} + \1{\{k(X_t) > U_{t+1}\}} ) \; .
\end{align*}
This is a multiplicative version of the Stochastic Unit Root process; see
\cite{gourieroux:robert:2006}.  The transition kernel $\Pi$ of the chain is defined by
\begin{align*}
  \Pi(x,A) = F_R(x^{-\phi}A) (1-k(x)) + F_R(A)k(x) \; .
\end{align*}
If \mbox{$\lim_{x\to\infty} k(x)=\eta$}, then Condition~(\ref{eq:kernel}) holds
with $G$ defined by
\begin{align*}
  G(A) = \lim_{x\to\infty} \Pi(x,x^\phi A) = F_R(A) (1-\eta) + \eta\delta_0(A)\; ,
\end{align*}
where $\delta_0$ is the Dirac mass at 0. If $\eta=0$, then we can apply
Theorem~\ref{theo:markov-indep}. The conditional scaling exponent at lag 1 is
$\kappa_1=\phi$. If $\eta>0$, then $G(\{0\})>0$ and Theorem
\ref{theo:markov-indep} is not applicable.  However, in the simple case
$k(x)\equiv \eta$, it is readily checked that the conclusion of
Theorem~\ref{theo:markov-indep} nevertheless holds, i.e.~if the distribution of
$X_0$ has a right tail index $\alpha>0$, then, conditionally on $X_0>x$,
$(x^{-1}X_0, x^{-\phi} X_1,\dots,x^{-\phi^h} X_h)$ converges to the tail process
$(Y_0,Y_1,\dots,Y_h)$ where $Y_j=Y_{j-1}^\phi W_j$, $j\geq1$, $Y_0$ has a standard Pareto
distribution with tail index $\alpha$ and $W_1,\dots,W_h$ are i.i.d.~with
distribution $G$.

Let us now briefly discuss the existence of a stationary distribution for this
Markov chain. We apply the Foster-Lyapunov criterion. See
\cite{meyn:tweedie:2009}.  Define $V(x)=\log(x)$ and $c=\esp[\log(R_0)]$. Then
we have $\Pi V(x) = \phi\{1-k(x)\}V(x) + c$. Assume that $R_0$ has an absolutely
continuous distribution with a positive density around~0 so that the chain is
irreducible. If $\phi\in(0,1)$, then $\Pi V(x) \leq \phi V(x) + c$, and thus
there exists a unique invariant distribution and the chain is geometrically
ergodic.

\section{Exponential linear process}
\label{sec:explin}
The result for the Exponential AR(1) model can be extended to a (non-Markovian)
exponential linear process $\rme^{\xi_t}$ with possible long memory.


\begin{lemma}
  \label{lem:linear_assumption-1}
  Define $\xi_t = \sum_{j=0}^\infty \phi_{j} \epsilon_{t-j}$, where $\{\epsilon_{t}\}$ is a sequence
  of i.i.d.~random variables such that $\esp[\epsilon_0]=0$ and such that \eqref{eq:expepsilon-rv}
  holds, $\phi_0=1$, $\sum_{j=1}^\infty \phi_{j}^2<\infty$ and $0 \leq \phi_{j} <1$ for
  all~$j\geq1$.

  The sequence of measures defined on $(0,\infty]\times[0,\infty]^h$ by
  \begin{align*}
  \frac1{\pr(\rme^{\xi_0}>x)} \pr \left( \left(\frac{\rme^{\xi_0}}{x},
      \frac{\rme^{\xi_1}}{\scalfunccev_{1}(x)}, \cdots,
      \frac{\rme^{\xi_h}}{\scalfunccev_{h}(x)} \right) \in \cdot \right)
\end{align*}
converges vaguely to the measure $\nucond_{h}$ defined by
  \begin{align*}
    \nucond_{h}(\cdot) = \frac1{\esp[\rme^{\alpha\xi_0^*}]} \int_0^\infty \pr\left(
      (v\rme^{\xi_0^*},v^{\phi_1}\rme^{\xi_1^*}\dots,v^{\phi_h}\rme^{\xi_h^*})
      \in \cdot \right) \alpha v^{-\alpha-1} \, \rmd v \; .
  \end{align*}
  where $\xi_{j}^* = \xi_j - \phi_j\epsilon_0$ and $b_j(x)=x^{\phi_j}$, $j\geq1$.
\end{lemma}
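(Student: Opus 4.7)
The plan rests on the decomposition $\xi_t = \phi_t \epsilon_0 + \xi_t^*$, where $\xi_t^* = \xi_t - \phi_t \epsilon_0$ is built from $\{\epsilon_s : s \neq 0\}$ and is therefore independent of $\epsilon_0$. Setting $U = \rme^{\epsilon_0}$ and $Z_t = \rme^{\xi_t^*}$ and using $\phi_0 = 1$, we obtain $\rme^{\xi_t} = U^{\phi_t} Z_t$, so with $\scalfunccev_j(x) = x^{\phi_j}$ the rescaled vector factorizes as
\begin{align*}
\left( \frac{U}{x}\, Z_0,\; \left(\frac{U}{x}\right)^{\phi_1} Z_1,\; \dots,\; \left(\frac{U}{x}\right)^{\phi_h} Z_h \right) \; ,
\end{align*}
with $U$ independent of $(Z_0, \dots, Z_h)$. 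All the extremal behavior thus comes from $U$, which is regularly varying with index $-\alpha$ by~\eqref{eq:expepsilon-rv}.

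First I would apply Breiman's lemma to write $\pr(\rme^{\xi_0} > x) = \pr(U Z_0 > x) \sim \esp[Z_0^\alpha]\, \pr(U > x) = \esp[\rme^{\alpha \xi_0^*}]\, \pr(U > x)$. Finiteness of $\esp[\rme^{\alpha \xi_0^*}] = \prod_{j \geq 1} \esp[\rme^{\alpha \phi_j \epsilon_0}]$ follows from $\phi_j < 1$ combined with~\eqref{eq:expepsilon-rv}; the product converges by a Taylor expansion of $s \mapsto \log \esp[\rme^{s \epsilon_0}]$ at the origin, using $\esp[\epsilon_0] = 0$ and $\sum_j \phi_j^2 < \infty$. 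Next, I would establish vague convergence by testing against a continuous $f$ with compact support in $(0, \infty] \times [0, \infty]^h$, which forces $f$ to vanish unless its first argument is at least some $y_0 > 0$. Conditioning on $(Z_0, \dots, Z_h)$ and using the independence of $U$, the numerator equals $\esp[H_x(Z_0, \dots, Z_h)]$ with
\begin{align*}
H_x(z_0, \dots, z_h) = \esp\!\left[ f\!\left(V z_0, V^{\phi_1} z_1, \dots, V^{\phi_h} z_h\right) \right] \; , \qquad V = U/x \; ,
\end{align*}
and regular variation of $U$ yields pointwise in $(z_0, \dots, z_h)$
\begin{align*}
\frac{H_x(z_0, \dots, z_h)}{\pr(U > x)} \longrightarrow \int_0^\infty f\!\left(v z_0, v^{\phi_1} z_1, \dots, v^{\phi_h} z_h\right) \alpha v^{-\alpha-1} \, \rmd v \; .
\end{align*}
Integrating in $(Z_0, \dots, Z_h) = (\rme^{\xi_0^*}, \dots, \rme^{\xi_h^*})$ and dividing by the Breiman equivalent of $\pr(\rme^{\xi_0} > x)$ recovers exactly the stated formula for $\nucond_h$.

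The main obstacle is to justify exchanging the limit with the outer expectation. I would rely on Potter's bound: for any small $\delta > 0$ there exist constants $C, x_1$ such that $\pr(U > xv)/\pr(U > x) \leq C(v^{-\alpha + \delta} \vee v^{-\alpha - \delta})$ for $x \geq x_1$ and $xv \geq x_1$. Since $f$ is bounded and its support forces $V z_0 \geq y_0$, this yields $H_x(z_0, \dots, z_h)/\pr(U > x) \leq C\|f\|_\infty (z_0^{\alpha + \delta} \vee z_0^{\alpha - \delta})$, uniformly in $(z_1, \dots, z_h)$. Uniform integrability then reduces to $\esp[Z_0^{\alpha + \delta}] = \esp[\rme^{(\alpha + \delta) \xi_0^*}] < \infty$, which holds for $\delta$ small enough because $\sum_j \phi_j^2 < \infty$ forces $\phi_j \to 0$, hence $\sup_j \phi_j < 1$, so the product $\prod_{j \geq 1} \esp[\rme^{(\alpha+\delta)\phi_j \epsilon_0}]$ converges by the same Taylor argument used for $\esp[\rme^{\alpha \xi_0^*}]$. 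Dominated convergence then completes the proof.
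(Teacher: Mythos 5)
Your proof is correct in substance and uses the same core decomposition as the paper ($\xi_t=\phi_t\epsilon_0+\xi_t^*$ with $\epsilon_0$ independent of the $\xi_t^*$, plus Breiman's lemma for the denominator), but the technical heart --- justifying the interchange of limit and integration --- is handled by a genuinely different route. The paper conditions on $\epsilon_0$ and works with the measure $\sigma_x(\rmd v)=F_\epsilon(x\,\rmd v)/\bar F_\epsilon(x)$: it uses vague convergence of $\sigma_x$ against the bounded kernel $\tilde K_h(v,\cdot)$ on $[c,\infty]$ and then kills the contribution of $\int_0^c$ by a Markov inequality with a moment of order $q\in(\alpha,\alpha/\phi^*)$ of $\rme^{\xi_0^*}$, letting $c\to0$. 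You condition the other way round, on $(Z_0,\dots,Z_h)=(\rme^{\xi_0^*},\dots,\rme^{\xi_h^*})$, obtain the pointwise limit of $H_x(z)/\pr(U>x)$ from one-dimensional regular variation of $U$ (legitimate, since the compact support of $f$ in $(0,\infty]\times[0,\infty]^h$ confines $v$ to $[y_0/z_0,\infty]$), and then dominate via Potter's bound to apply dominated convergence in $z$. Both interchanges ultimately rest on the same moment fact, $\esp[\rme^{q\xi_0^*}]<\infty$ for some $q>\alpha$ thanks to $\phi^*=\sup_{j\ge1}\phi_j<1$; your version is arguably cleaner in that the limit identification is a scalar regular-variation statement, while the paper's version avoids Potter bounds at the cost of an explicit $\epsilon$--$c$ truncation argument.

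One small point to tighten: Potter's bound in the form you quote only applies when both $x\ge x_1$ and $xy_0/z_0\ge x_1$, so the domination $H_x(z)/\pr(U>x)\le C\|f\|_\infty\,(z_0^{\alpha+\delta}\vee z_0^{\alpha-\delta})$ is not yet established on the event $z_0>xy_0/x_1$. That regime is handled by bounding $\pr(U>xy_0/z_0)$ by $1$ and using $\pr(U>x)^{-1}\le C x^{\alpha+\delta}\le C(x_1 z_0/y_0)^{\alpha+\delta}$ for $x$ large, which yields the same dominating function $C'z_0^{\alpha+\delta}$; since $\esp[Z_0^{\alpha+\delta}]<\infty$ for your choice of $\delta$, the argument closes. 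With that one line added, the proof is complete.
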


Equivalently, we obtain the following limiting conditional distribution.  For
$y_0\geq1$ and~$(y_0,y_1,\dots,y_h)\in(0,\infty)\times\Rset^h$,
  \begin{multline}
    \label{eq:formula_linear_multivariate}
    \lim_{x\to\infty} \pr(\rme^{\xi_0} > xy_0,\rme^{\xi_1}\leq x^{\phi_1}
    y_1,\dots,\rme^{\xi_h} \leq x^{\phi_h} y_h \mid \rme^{\xi_0} >x) \\
    = \frac1{\esp[\rme^{\alpha\xi_0^*}]} \int_0^\infty \pr\left(\rme^{\xi_0^*} >
      v y_0, \rme^{\xi_1^*} \leq v^{\phi_1} y_1, \dots, v^{\phi_h}\rme^{\xi_h^*}
      \leq y_h \right) \, \alpha \, v^{-\alpha-1} \, \rmd v \; .
  \end{multline}
Thus, the lag $h$ conditional scaling index is $\phi_h$.  If the coefficients
$\phi_j$ are decreasing, i.e. $\phi_j>\phi_{j+1}$ for all $j\geq0$, then the
index of hidden regular variation of $(\rme^{\xi_0},\rme^{\xi_j})$ is
$\alpha(2-\phi_j)$. Otherwise, it is the solution of an infinite dimensional
optimization problem and may take any value. See \cite{janssen:drees:2013} for
more details.
\begin{proof}[Proof of Lemma~\ref{lem:linear_assumption-1}]
  To avoid trivialities, we assume that $\phi_j\ne0$ for at least one index $j\geq1$.  By
  definition, we have $\xi_k = \phi_k \epsilon_0 + \xi_{k}^*$ for all $k\geq0$. Note that
  $\epsilon_0$ is independent of $\xi_{k}^*$, $k\ge 0$. Denote the distribution of
  $\rme^{\epsilon_0}$ by $F_\epsilon$ and define the measure $\sigma_x$ by $\sigma_x(\rmd v) =
  F_\epsilon(x\rmd v)/\bar F_\epsilon(x)$.  The measure $\sigma_x$ converges vaguely on $(0,\infty]$
  to the measure with density $\alpha v^{-\alpha-1} \, \rmd v$; see
  \cite[Theorem~3.6]{resnick:2007}.  Then, for $(y_0,\dots,y_h)\in(0,\infty)\times[0,\infty]^h$,
\begin{align*}
  \pr(\rme^{\xi_0} & > xy_0, \rme^{\xi_1} \leq x^{\phi_1} y_1,\dots,\rme^{\xi_h}
  \leq x^{\phi_h} y_h)  \\
  & = \pr(\rme^{\epsilon_0+\xi_{0}^*} > xy_0, \rme^{\phi_1\epsilon_0}
  \rme^{\xi_{1}^*} \leq  x^{\phi_1} y_1,\dots, \rme^{\phi_h \epsilon_0} \rme^{\xi_{h}^*} \leq x^{\phi_h} y_h ) \\
  & = \int_{u=0}^\infty \pr(\rme^{\xi_{0}^*}>(x/u)y_0, \rme^{\xi_{1}^*} \leq
  (x/u)^{\phi_1} y_1,\dots,  \rme^{\xi_{h}^*} \leq  (x/u)^{\phi_h} y_h ) F_\epsilon(\rmd u) \\
  & = \bar F_\epsilon(x) \int_{v=0}^\infty \pr(\rme^{\xi_{0}^*}>v^{-1}y_0,
  \rme^{\xi_{1}^*} \leq v^{-\phi_1} y_1,\dots, \rme^{\xi_{h}^*} \leq
  v^{-\phi_h} y_h ) \, \sigma_x(\rmd v) \; .
\end{align*}
In order to prove the convergence of the integral, we must split it into two
parts. Define the function $\tilde{K}_h$ on $(0,\infty)^2\times\Rset^h$ by
\begin{align*}
  \tilde{K}_h(v,y_0,\dots,y_h) = \pr(\rme^{\xi_{0}^*}>v^{-1}y_0,
  \rme^{\xi_{1}^*} \leq v^{-\phi_1} y_1,\dots, \rme^{\xi_{h}^*} \leq v^{-\phi_h}  y_h )\; .
\end{align*}The function $\tilde{K}_h$ is uniformly bounded (by
one), thus for $c>0$, we have,
\begin{align*}
  \lim_{x\to\infty} \int_c^\infty \tilde{K}_h(v,y_0,y_1,\dots,y_h) \,
  \sigma_x(\rmd v) = \int_c^\infty \tilde{K}_h(v,y_0,y_1,\dots,y_h) \, \alpha \,
  v^{-\alpha-1} \, \rmd v \; .
\end{align*}
Let $\phi^*=\sup_{j\geq 1}\phi_j$. By assumption, $0<\phi^*<1$. Thus $\rme^{\xi_{0}^*}$ has the tail
index $\alpha/\phi^*>\alpha$.  Moreover, since $\epsilon_0$ is independent of $\xi_{0}^*$, by Markov's
inequality, we have, for $\alpha<q<\alpha/\phi^*$,
\begin{align*}
  \int_0^c \tilde{K}_h(v,y_0,y_1,\dots,y_h) \, \sigma_x(\rmd v) & \leq
  \frac{\pr(\rme^{\epsilon_0}\rme^{\xi_{0}^*}>xy_0,\rme^{\epsilon_0}\leq cx)}{\bar{F}_\epsilon(x)} \\
  & \leq \frac{\esp[\rme^{q\xi_{0}^*}] \, \esp[\rme^{q\epsilon_0}
    \1{\{\rme^{\epsilon_0} \leq cx\}}]} {(xy_0)^q\bar{F}_\epsilon(x)} \; .
\end{align*}
We obtain that
$$
\limsup_{x\to\infty} \int_0^c \tilde{K}_h(v,y_0,y_1,\dots,y_h) \, \sigma_x(\rmd v) = O(c^{q-\alpha})
$$
and thus since $q>\alpha$,
\begin{align*}
  \lim_{c\to0} \limsup_{x\to\infty} \int_0^c \tilde{K}_h(v,y_0,y_1,\dots,y_h) \,
  \sigma_x(\rmd v) = 0 \; .
\end{align*}
We may now conclude that
\begin{align*}
  \lim_{x\to\infty} \int_0^\infty \tilde{K}_h(v,y_0,y_1,\dots,y_h) \,
  \sigma_x(\rmd v) = \int_0^\infty \tilde{K}_h(v,y_0,y_1,\dots,y_h) \, \alpha \,
  v^{-\alpha-1} \, \rmd v \; .
\end{align*}
Since $\lim_{x\to\infty} \pr(\rme^{\xi_0}>x)/\pr(\rme^{\epsilon_0}>x) =
\esp[\rme^{\alpha\xi_{0}^*}]$, we finally obtain
\begin{multline*}
  \lim_{x\to\infty} \pr(\rme^{\xi_0} > xy_0, \rme^{\xi_1} \leq x^{\phi_1} y_1,
  \dots, \rme^{\xi_h} \leq x^{\phi_h} y_h \mid \rme^{\xi_0} > x)  \\
  = \frac1{\esp[\rme^{\alpha\xi_{0}^*}]} \; \int_0^\infty
  \tilde{K}_h(v,y_0,y_1,\dots,y_h) \, \alpha \, v^{-\alpha-1} \, \rmd v \; ,
\end{multline*}
which is exactly~(\ref{eq:formula_linear_multivariate}).
\end{proof}

The AR(1) process is a particular case of a linear process with $\phi_j=\phi^j$
for all $j\geq0$, so (\ref{eq:formula_linear_multivariate}) and
(\ref{eq:jointlimdist-expar1}) must coincide in this case. To check this,
recalling the notation of Section~\ref{sec:expar}, note that for the exponential
AR(1) we have $\xi_{k}^*=\xi_{0,k}+\phi^k\xi_{0}^*$ and $\xi_{0}^*$ is
independent of $\xi_{0,k}$ for each $k\ge 1$. Denoting by $F_{*}$ the distribution
of $\rme^{\xi_{0}^*}$ and considering for clarity only the case  $h=1$, we have
\begin{align*}
  \frac1{\esp[\rme^{\alpha\xi_{0}^*}]} & \; \int_0^\infty
  \pr(\rme^{\xi_{0}^*}>v^{-1}y_0,\rme^{\xi_{1}^*}\le v^{-\phi}y_1) \, \alpha \, v^{-\alpha-1} \, \rmd v \\
  & = \frac1{\esp[\rme^{\alpha\xi_{0}^*}]} \;
  \int_{v=0}^\infty\int_{s=v^{-1}y_0}^{\infty}
  \pr(\rme^{\xi_{0,1}}\le (sv)^{-\phi}y_1)\, F_{*}(\rmd s) \, \alpha \, v^{-\alpha-1} \, \rmd v \\
  & = \frac1{\esp[\rme^{\alpha\xi_{0}^*}]} \;
  \int_{s=0}^\infty\int_{v=s^{-1}y_0}^{\infty}
  \pr(\rme^{\xi_{0,1}}\le (sv)^{-\phi}y_1)\,  \alpha \, v^{-\alpha-1} \, \rmd v \,F_{*}(\rmd s)  \\
  & = \frac1{\esp[\rme^{\alpha\xi_{0}^*}]} \; \int_{s=0}^\infty s^{\alpha}
  F_{*}(\rmd s) \int_{y_0}^{\infty} \pr(\rme^{\xi_{0,1}}\le u^{-\phi}y_1)\,
  \alpha \, u^{-\alpha-1} \, \rmd u  \\
  & = \int_{y_0}^{\infty} \pr(\rme^{\xi_{0,1}}\le u^{-\phi}y_1)\, \alpha \,
  u^{-\alpha-1} \, \rmd u\; .
\end{align*}
That is, with $h=1$, Equation (\ref{eq:formula_linear_multivariate}) reduces to
(\ref{eq:jointlimdist-expar1}).

\section{Stochastic volatility models}
\label{sec:sv}
\subsection{Stochastic volatility process with heavy tailed volatility}
\label{sec:heavytailedvol}
Assume now as in~\cite{mikosch:rezapour:2013} that $X_t =
V_tZ_t=\rme^{\xi_t}Z_t$, where $\sequence{\xi}{t}{\Zset}$ is the AR(1) process
considered in Section \ref{sec:expar} and $\sequence{Z}{t}{\Zset}$ is a sequence
of i.i.d.~random variables such that $\esp[|Z_0|^{q}]<\infty$ for some
$q>\alpha$, independent of the sequence $\sequence{\xi}{t}{\Zset}$. Breiman's
lemma yields, for $x\to\infty$,
\begin{align}
  \label{eq:tail-SV-MR}
  \pr(X_0>x) & \sim \esp[(Z_0)_{+}^{\alpha}] \pr(\rme^{\xi_0}>x) \; , \\
  \pr(X_0<-x) & \sim \esp[(Z_0)_{-}^{\alpha}] \pr(\rme^{\xi_0}>x) \; .
\end{align}
Let $F_0$ be the distribution function of $X_0$ and $\nu_x$ is the
measure defined by $\nu_x(\rmd u)=F_0(x\rmd u)/\bar F_0(x)$. Then,
conditioning on $X_0$ and the sequence $\{Z_t\}$, we
have, for $y_0\geq1$ and $(y_1,\dots,y_h)\in\Rset^h$,
\begin{align*}
  \pr(X_0 & > xy_0, X_1 \leq x^{\phi_1} y_1, \dots, X_h \leq x^{\phi_h} y_h \mid X_0 > x) \\
  & = \frac{\pr(Z_0V_0 > x y_0, Z_1\rme^{\xi_{0,1}} V_0^\phi \leq
    x^\phi y_1,\dots, Z_h\rme^{\xi_{0,h}} V_0^{\phi^h} \leq x^{\phi^h} y_h)}{\pr(X_0>x)} \\
  & = \frac{1}{\pr(X_0>x)} \int_0^\infty \pr(Z_0 > (x/u) y_0,
  Z_1\rme^{\xi_{0,1}} \leq (x/u)^\phi y_1,\dots, Z_h\rme^{\xi_{0,h}} \leq
  (x/u)^{\phi^h} y_h) \, F_0(\rmd u) \\
  & = \frac{\pr(V_0>x)}{\pr(X_0>x)} \int_0^\infty \pr(Z_0 > v^{-1} y_0,
  Z_1\rme^{\xi_{0,1}} \leq v^{-\phi} y_1,\dots, Z_h\rme^{\xi_{0,h}} \leq
  v^{-\phi^h} y_h) \, \nu_x(\rmd v) \; .
\end{align*}

By arguments similar to those in the proof
of~(\ref{eq:formula_linear_multivariate}), we obtain
\begin{multline*}
  \lim_{x\to\infty}   \pr(X_0  > xy_0, X_1 \leq x^{\phi_1} y_1, \dots, X_h \leq x^{\phi_h} y_h \mid X_0 > x) \\
  = \frac1{\esp[(Z_0)_+^\alpha]} \; \int_0^\infty \pr(Z_0 > v^{-1} y_0,
  Z_1\rme^{\xi_{0,1}} \leq v^{-\phi} y_1,\dots, Z_h\rme^{\xi_{0,h}} \leq
  v^{-\phi^h} y_h) \alpha \, v^{-\alpha-1} \, \rmd v \; .
\end{multline*}

The conditional scaling exponent is thus $\kappa_h = \phi^h$ and the
limiting conditional distribution of $x^{-\kappa_h}X_h$ given
$X_0>x$ is
\begin{align*}
  \lim_{x\to\infty} \pr(X_h \leq x^{\kappa_h} y \mid X_0>x) = \int_0^\infty
  \pr(Z_0>v^{-1} \, , \; Z_h\rme^{\xi_{0,h}} \leq v^{-\kappa_h}y)  \alpha  v^{-\alpha-1}  \, \rmd v \; .
\end{align*}
If $\alpha>1$, we can apply Lemma~\ref{lem:weakconv-unbounded}  to obtain
\begin{align*}
  \lim_{x\to\infty} \esp \left[ \frac{(X_h)_{+}}{x^{\kappa_h}} \mid X_0 > x \right]
  & = \frac{ \alpha \, \esp \left [ (Z_0)_+^{\alpha-\kappa_h} \right] \, \esp[(Z_0)_+] \,
    \esp [ X_0 ] } {(\alpha-\kappa_h)\esp[(Z_0)_+^\alpha] \esp[X_0^{\kappa_h}]} \; .
\end{align*}
\paragraph{Tail of $X_0X_h$.} Using similar computation as in Section \ref{sec:expar} we have
\begin{align*}
  \esp \left[ \left( \frac{X_0}{x} \1{\{X_0 \leq \epsilon x\}} \frac{X_h}{\scalfunccev_h(x)}
    \right)^\delta \right] = \esp\left[ \frac{V_0^{\delta(1+\phi^h)}Z_0}{x^{\delta(1+\phi^h)}}
    \1{\{V_0Z_0 \leq \epsilon x\}} \right] \esp \left[ \rme^{\delta\sum_{j=0}^{h-1} \phi^j
      \epsilon_h-j} \right]\esp[Z_h^{\delta}] \; .
\end{align*}
Moreover, if $F_Z$ is the distribution function of $Z_0$, then 
\begin{align*}
  &\frac{1}{\pr(V_0>x)}\esp\left[ \frac{V_0^{\delta(1+\phi^h)}Z_0}{x^{\delta(1+\phi^h)}} \1{\{V_0Z_0
      \leq \epsilon x\}} \right]=\frac{1}{\pr(V_0>x)} \int \esp\left[
    \frac{V_0^{\delta(1+\phi^h)}u}{x^{\delta(1+\phi^h)}} \1{\{V_0 \leq \epsilon x/u\}}  \right]F_Z(\rmd u)  \\
  &\leq \epsilon^{\delta(1+\phi^h)}\int u^{-\delta(1+\phi^h)}\frac{\pr(V_0>\epsilon x/u) }
  {\pr(V_0>x)} u F_Z(\rmd u)\leq C \epsilon^{\delta(1+\phi^h)-\alpha} \int
  u^{\alpha+1-\delta(1+\phi^h)} F_Z(\rmd u) \; .
\end{align*}
If $\delta$ is chosen such that $
\delta<\alpha$, $\alpha<\delta (1+\phi^h)<\alpha+1$, then the condition 
(\ref{eq:condition-epsilon-product-cev}) holds. 
\subsection{Stochastic volatility process with heavy tailed innovation}
Assume that $X_{t} = \sigma_{t}Z_t$, where $\sequence{Z}{t}{\Zset}$
is an i.i.d. sequence with regularly varying marginal distribution
with tail index $\alpha$, $\sigma_{t}$ is non
negative, $\esp[\sigma_{t}^q]<\infty$ for some $q>\alpha$ and
$\{\sigma_{t}\}$ and $\{Z_{t}\}$ are independent.  Then, by
Breiman's lemma, $X_{t}$ is regularly varying, and has extremal
independence:
\begin{align*}
  \pr(X_0>x) & \sim \esp[\sigma_0^{\alpha}]\bar F_Z(x) \; ,
\end{align*}
and
\begin{align*}
  \pr(X_0 > x \;, \ X_h>x)
  = o(\bar F_Z(x)) \; ,
\end{align*}
where $F_Z$ is the distribution function of $Z_0$.
 For any integer $h>0$, we have
\begin{align}
  \label{eq:sv-noL}
  \lim_{x\to\infty} \pr \left( \frac{X_0}x>y_0, X_1\leq y_1,\dots,X_h \leq y_h
    \mid X_0>x \right) = \frac{ \esp[\sigma_0^{\alpha} F_Z({y_1}/{\sigma_1})
    \cdots F_Z({y_h}/{\sigma_h})]} {y_0^{\alpha} \, \esp[\sigma_0^{\alpha}]} \; .
\end{align}
In particular,
\begin{align*}
  \lim_{x\to\infty} \pr(X_h \leq y_h \mid X_0>x) =
  \frac{\esp\left[\sigma_0^{\alpha} F_Z(y_h/\sigma_h) \right]}
  {\esp[\sigma_0^{\alpha}]} \; .
\end{align*}
The conditional scaling exponent $\kappa_h$ is 0 at all lags $h\ge 1$.  Note
also that
\begin{align*}
  \lim_{x\to\infty} \pr(X_h > y_h \mid X_0>x)=\frac{\esp\left[\sigma_0^{\alpha}
      \bar{F}_Z(y_h/\sigma_h) \right]} {\esp[\sigma_0^{\alpha}]} \sim \bar{F}_Z(y_h)
  \frac{\esp\left[\sigma_0^{\alpha}\sigma_h^{\alpha}\right]} {\esp[\sigma_0^{\alpha}]} \sim
  \bar{F}_X(y_h) \frac{\esp\left[\sigma_0^{\alpha}\sigma_h^{\alpha}\right]}
  {\esp[\sigma_h^{\alpha}]\esp[\sigma_0^{\alpha}]}\; ,
\end{align*}
as $y_h\to\infty$. Hence, the limiting conditional distribution is
tail equivalent to the unconditional distribution.

For more details on the extremal behavior of this model we refer to
\cite{davis:mikosch:2001} 
 and \cite{kulik:soulier:2013}. In particular, in the latter paper the
conditional model and its extensions to different conditioning events is
considered (cf. the discussion in Section \ref{sec:concluding}), together with
relevant statistical inference.

If $\alpha>1$, then Lemma~\ref{lem:weakconv-unbounded}  applies and we obtain
\begin{align*}
  \lim_{x\to\infty} \esp[(X_h)_+\mid X_0>x] = \frac{\esp[(Z_0)_+]
    \esp[\sigma_h \sigma_0^\alpha]} {\esp[\sigma_0^\alpha]} \; .
\end{align*}

\subsection{Stochastic volatility process with heavy tailed innovation and leverage}
\label{sec:sv-heavy-innovation-leverage}
We now consider a stochastic volatility process $X_{t} = \sigma_{t}Z_{t}$ and
assume that the volatility~$\sigma_{t}$ has the form
\begin{align*}
  \sigma_{t} = \sigma(\xi_{t}) \; ,
\end{align*}
where $\sigma$ is a positive function, $\xi_{t}=\sum_{j=1}^\infty c_j
\eta_{t-j}$, $\sum_{j=1}^\infty c_j^2<\infty$ and $\{(Z_{t},\eta_{t})\}$ is an
i.i.d. sequence, but for each $t$, $Z_{t}$ and $\eta_{t}$ may be dependent. This
implies that the volatility $\sigma_{t}$ is independent of the innovation
$Z_{t}$ for each $t$, but $\sigma_{t}$ may be dependent of $\{Z_j,j<t\}$. This
allows for some leverage: today's value impacts future volatility. We still
assume that the distribution of $Z_0$ is regularly varying with index
$\alpha$. For each~$t$, $Z_{t}$ and $\sigma_{t}$ are independent, thus, if
$\esp[\sigma_{t}^{q}]<\infty$ for some $q>\alpha$, Breiman's lemma applies and
we obtain
\begin{align*}
  \pr(X_0>x) & \sim \esp[\sigma_0^{\alpha}]\bar F_Z(x) \; .
\end{align*}
Consider now the probability of joint exceedances.  Since $\sigma_h$
and $Z_0$ may be dependent, we have,
\begin{align*}
  \pr(X_0 > x \;, \ X_h>x)
  & = \pr( Z_0 \sigma_0 > x \; , \ Z_h \sigma_h > x) \\
  & = \esp \left[
    \bar F_Z(x/\sigma_h) \1{\{Z_0 \sigma_0 > x\}} \right]      \\
  & \sim \bar F_Z(x) \esp \left[ \sigma_h^\alpha \1{\{Z_0 \sigma_0 > x\}}
  \right] = o(\bar F_Z(x)) \; .
\end{align*}
(For the last part, a bounded convergence argument is used.)  Thus there is
still extremal independence, as in the previous model with no leverage, but the
rate of decay of the joint exceedances probability is affected by the dependence
between $\sigma_h$ and $Z_0$.

Under additional assumptions, we can obtain the limiting conditional
distributions.  Assume that $\eta_{j} = \log(|Z_{j}|) - \esp[\log(|Z_{j}|)]$,
$\sigma(x)=\rme^x$ and $0<c_{j}<1$ for all $j\geq1$. Define $\tilde\sigma_j =
\exp\{\sum_{i=1}^{j-1} c_i \eta_{j-i} -c_j\esp[\log(|Z_{0}|)] +
\sum_{i=j+1}^\infty c_i \eta_{j-i}\}$. Then, $X_j = \tilde\sigma_j |Z_0|^{c_j}
Z_j$ and by the same type of arguments as previously, we obtain, for
$(y_0,\dots,y_h)\in[1,\infty)\times\Rset^h$,
\begin{multline*}
  \lim_{x\to\infty} \pr(X_0>xy_0,X_1\leq x^{c_1}y_1,\dots,X_h\leq x^{c_h} y_h  \mid X_0>x)  \\
  = \int_0^\infty \pr(\sigma_0>y_0u^{-1},Z_1\tilde\sigma_1\leq
  y_1u^{-c_1}, \dots, Z_h \tilde\sigma_h \leq y_h u^{-c_h} ) \, \alpha u^{-\alpha-1} \,
  \rmd u \; .
\end{multline*}
The conditional scaling exponent depends on $h$: $\kappa_h=c_h$. The
marginal limiting distributions are also tail equivalent to the distribution
of~$X_0$.

If $\alpha>1$, Lemma~\ref{lem:weakconv-unbounded} applies again and we obtain
\begin{align*}
  \lim_{x\to\infty} \esp \left[ \frac{(X_h)_+}{x^{\kappa_h}} \mid X_0 > x \right] =
  \frac{\alpha \esp[(Z_h)_+] \, \esp[\tilde\sigma_h \sigma_0^{\alpha-\kappa_h}]} {(\alpha
    - \kappa_h) \esp[\sigma_0^\alpha]} = \frac{\alpha \esp[(Z_h)_+] \, \esp[\sigma_h
    \sigma_0^{\alpha-\kappa_h}]} {(\alpha - \kappa_h) \esp[|Z_0|^{\kappa_h}] \esp[\sigma_0^\alpha]} \; .
\end{align*}

\section{Proof of Theorem~\ref{theo:markov-indep}}
\label{sec:prooftheomarkov}

The following result is related to \cite[Theorem 5.5, page 34]{billingsley:1968}
and is sometimes referred to as \textit{the second continuous mapping
  theorem}. See also \cite[Lemma~8.4]{resnick:zeber:2013:asy}.
\begin{theorem}
  \label{theo:our-2CMT}
  Let $(E,d)$ be a complete locally compact separable metric space.  Let $\mu_n$ be
  a sequence of probability measures which converge weakly to a probability
  measure $\mu$ on $E$.
  \begin{enumerate}[(i)]
  \item \label{item:unif-measure} If $\varphi_n$ is a uniformly bounded sequence of
    continuous functions which converge uniformly on compact sets of $E$ to a
    function $\varphi$, then $\varphi$ is continuous and bounded on $E$ and
    $\lim_{n\to\infty} \mu_n(\varphi_n) = \mu(\varphi)$.
  \item \label{item:unif-noyau} Let $F$ be a topological space. If $g_n$ is a
    sequence of uniformly bounded, continuous functions on $F \times E$ which
    converge uniformly on compact sets of $F \times E$ to a function $g$, then
    $g$ is continuous and bounded on $F \times E$ and the sequence of functions
    $\int_E g_n(u,v) \mu_n(\rmd v)$ converge uniformly on compact sets of $F$ to
    $\int_E g(u,v) \mu_n(\rmd v)$
  \end{enumerate}
\end{theorem}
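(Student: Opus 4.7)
My plan is to handle the two parts in order, with part (ii) reducing to part (i) plus a uniformization argument.

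For part (i), continuity of $\varphi$ is automatic since the uniform-on-compacts limit of continuous functions on a locally compact space is continuous, and $|\varphi|\leq M:=\sup_n\|\varphi_n\|_\infty$ yields boundedness. For the integral convergence, the first step is to invoke Prokhorov's theorem, which applies since $E$ is a complete separable metric space, to obtain tightness of the family $\{\mu_n\}\cup\{\mu\}$. Given $\epsilon>0$, pick a compact set $K\subset E$ with $\mu_n(E\setminus K)<\epsilon$ for all $n$ and $\mu(E\setminus K)<\epsilon$. Writing
\begin{align*}
  \mu_n(\varphi_n)-\mu(\varphi)
  = \mu_n(\varphi_n-\varphi) + \bigl(\mu_n(\varphi)-\mu(\varphi)\bigr),
\end{align*}
the second term vanishes by the Portmanteau characterization of weak convergence because $\varphi$ is bounded and continuous. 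The first term is further split according to $\1{K}+\1{E\setminus K}$: the $K$-part is dominated by $\sup_{v\in K}|\varphi_n(v)-\varphi(v)|$, which tends to $0$ by uniform convergence on $K$, while the tail part is bounded by $2M\epsilon$ uniformly in $n$. Letting $\epsilon\downarrow 0$ concludes.

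For part (ii), continuity and boundedness of $g$ follow from the same reasoning. For each fixed $u\in F$, the set $\{u\}\times K$ is compact in $F\times E$ for any compact $K\subset E$, so $g_n(u,\cdot)\to g(u,\cdot)$ uniformly on compact subsets of $E$, and part (i) yields pointwise convergence of $h_n(u):=\int_E g_n(u,v)\,\mu_n(\rmd v)$ to $h(u):=\int_E g(u,v)\,\mu(\rmd v)$. To upgrade this to uniform convergence on a compact $K_F\subset F$, I would fix $\epsilon>0$, choose a compact $K\subset E$ controlling the tails of all $\mu_n$ and of $\mu$ as above, and exploit uniform continuity of $g$ on the compact product $K_F\times K$ to select a finite net $u_1,\dots,u_N\in K_F$ such that every $u\in K_F$ lies close to some $u_i$ with $\sup_{v\in K}|g(u,v)-g(u_i,v)|<\epsilon$. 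Writing
\begin{align*}
  h_n(u)-h(u)
  &= \mu_n\bigl(g_n(u,\cdot)-g(u,\cdot)\bigr) \\
  &\quad +\bigl(\mu_n(g(u,\cdot))-\mu_n(g(u_i,\cdot))\bigr)
  +\bigl(\mu_n(g(u_i,\cdot))-\mu(g(u_i,\cdot))\bigr) \\
  &\quad +\bigl(\mu(g(u_i,\cdot))-\mu(g(u,\cdot))\bigr),
\end{align*}
and splitting each piece by $\1{K}+\1{E\setminus K}$, I bound the first piece by $\sup_{K_F\times K}|g_n-g|+2M\epsilon$, the second and fourth each by $\epsilon+2M\epsilon$, and the third by $\max_{1\leq i\leq N}|\mu_n(g(u_i,\cdot))-\mu(g(u_i,\cdot))|$; this last quantity tends to $0$ since only finitely many bounded continuous test functions are involved, and part (i) applies to each.

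The main obstacle is the uniformization step in part (ii): weak convergence of $\mu_n$ does not by itself yield uniform convergence of $\mu_n(g(u,\cdot))$ over a family of test functions indexed by $u\in K_F$. The finite-net argument based on uniform continuity of $g$ on $K_F\times K$ plus tightness is the device that bridges this gap; the product structure of the compact $K_F\times K$ in $F\times E$ is what makes the uniform-on-compacts hypothesis on $g_n\to g$ effective here.
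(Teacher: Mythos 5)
Your proof is correct, but it takes a somewhat different technical route from the paper's. In part (i) the paper does not invoke Prokhorov's theorem: it only uses tightness of the single limit measure $\mu$ (Ulam's theorem) to find a compact $K$ with $\mu(K^c)$ small, replaces the indicator of $K$ by a continuous cutoff $\psi$ equal to one near $K$, and then transfers the tail bound to the $\mu_n$ asymptotically via $\limsup_{n}\mu_n(1-\psi)\le\mu(1-\psi)\le\mu(K^c)$, i.e.\ via weak convergence itself; your appeal to the converse half of Prokhorov's theorem (valid here since $E$ is complete separable metric) yields uniform tightness of $\{\mu_n\}\cup\{\mu\}$ directly and lets you split with indicators instead. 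In part (ii) the paper decomposes $L_n-L$ through the intermediate functions $\bar L_n(u)=\int_E g(u,v)\,\mu_n(\rmd v)$, asserting that part (i) already gives uniform convergence of $\bar L_n$ to $L$ on compacts of $F$ --- a step not literally contained in (i) as stated, which requires an argument along sequences $u_n\to u$ --- and then controls $L_n-\bar L_n$ by the uniform convergence of $g_n$ on product compacts. Your finite-net construction on $K_F$, which reduces everything to finitely many fixed test functions $g(u_i,\cdot)$ plus a modulus-of-continuity bound on $K_F\times K$, supplies exactly the uniformization the paper leaves implicit; the only caveat is that ``uniform continuity of $g$ on $K_F\times K$'' presumes a metric (or uniform) structure on $F$, whereas the theorem allows $F$ to be an arbitrary topological space, so the net should be selected by a tube-lemma/compactness argument rather than by uniform continuity. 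Both routes are sound; yours is more self-contained in (ii), while the paper's is lighter on machinery in (i).
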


\begin{proof}
  We start by proving~\eqref{item:unif-measure}. Let $C$ be such that $\sup_{n\geq1}
  \|\varphi_n\|_\infty \leq C$ and $\|\varphi\|_\infty\leq C$.  Fix some $\epsilon>0$ and let $K$ be
  a compact set such that $\mu(\partial(K^c))=0$ and $\mu(K^c)\leq \epsilon/(2C)$. Let $K_\epsilon=
  \{x\in E\mid d(x,K)\leq \epsilon\}$ and let $\psi$ be a continuous function such that $0\leq
  \psi(x)\leq 1$ for all $x\in E$, $\psi(x)=1$ if $x\in K_\epsilon$ and $\psi(x)=0$ if $x\notin
  K_\epsilon$.
  \begin{align*}
    \mu_n(\varphi_n) - \mu(\varphi) = \mu_n(\varphi_n) - \mu_n(\varphi) + \mu_n(\varphi) - \mu(\varphi) \; .
  \end{align*}
  By weak convergence, $\lim_{n\to\infty} \mu_n(\varphi) = \mu(\varphi)$, so we only need
  to consider $\mu_n(\varphi_n)-\mu_n(\varphi)$. Using the function $\psi$
  defined above, we have
  \begin{align*}
    | \mu_n(\varphi_n)-\mu_n(\varphi) | & \leq |\mu_n(\varphi_n \psi) -\mu_n(\varphi\psi) | + |\mu_n((1-\psi)\varphi_n) -
    \mu_n(\varphi(1-\psi))| \\
    & \leq \mu_n(|\varphi\psi-\varphi_n\psi|) + 2 C \mu_n(1-\psi) \; .
  \end{align*}
  Since $\varphi_n$ converges to $\varphi$ uniformly on compact sets and the function $1-\psi$
  is bounded and continuous, we obtain
  \begin{align*}
    \limsup_{n\to\infty} | \mu_n(\varphi_n)-\mu_n(\varphi) | & \leq 2 C \mu(1-\psi) \leq 2C
    \mu(K^c) \leq \epsilon \; .
  \end{align*}
Since $\epsilon$ is arbitrary, the proof of \eqref{item:unif-measure} is concluded.

We now prove~\eqref{item:unif-noyau}. Define $L_n(u) = \int_{E} g_n(u,v)
\mu_n(\rmd v)$, $\bar{L}_n(u)= \int_{E} g(u,v) \mu_n(\rmd v)$ and $L(u) =
\int_{E} g(u,v) \mu(\rmd v)$. Since $g$ is bounded and continuous, the first
part of the theorem implies that $\bar{L}_n$ converges uniformly to $L$ on
compact sets of $F$.  We now prove that $L_n-\bar L_n$ converges to zero
uniformly on compact sets of $F$. Fix $\epsilon>0$ and let $K_\epsilon$ be as
above. Since $g_n$ and $g$ are uniformly bounded, there exists $C>0$ such that
\begin{align*}
  |L_n(u) - \bar L_n(u)| & \leq  \sup_{v\in K_\epsilon} |g_n(u,v)-g(u,v)|
  + 2 C \epsilon \; .
\end{align*}
For any compact set $S$ of $F$, $g_n$ converges uniformly on $S\times
K_\epsilon$ to $g$, thus
\begin{align*}
  \limsup_{n\to\infty} \sup_{u\in S}   |L_n(u) - \bar L_n(u)| \leq 2 C \epsilon \; .
\end{align*}
Since $\epsilon$ is arbitrary, this proves that $L_n-\bar L_n$ converges to $0$
uniformly on compact sets of~$F$.
\end{proof}

We finally need the following lemma. Let $\Pi$ and $G$ be as in
Assumption~\ref{hypo:markov-kernel} and define the kernels $\Pi_x$ and $G_{1}$
by
\begin{align*}
  \Pi_xf(u)  & = \int_0^\infty f(v) \Pi(xu,b(x)\rmd v) \; , \\
  G_{1}f(u) & = \int_0^\infty f(u^\kappa v) G(\rmd v)=\int_0^\infty f(v) G(u^{-\kappa}\rmd v) \; .
\end{align*}
\begin{lemma}
  \label{lem:enfin}
  Let $f$, $f_x,x>0$, be uniformly bounded, continuous functions on
  $[0,\infty)$. Assume that
  \begin{enumerate}[(i)]
  \item \label{item:G0>0} either $f_x$ converges uniformly on compact sets of
    $[0,\infty)$ to $f$;
  \item \label{item:G0=0} or $f_x$ converges uniformly on compact sets of
    $(0,\infty)$ to $f$ and $G(\{0\})=0$.
  \end{enumerate}
  Then $\Pi_x f_x$ converges uniformly on compact sets of $(0,\infty)$ to
  $G_{1}f$.
\end{lemma}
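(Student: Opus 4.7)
The plan is to reduce the claim to Theorem~\ref{theo:our-2CMT}\ref{item:unif-measure} via a change of variables exploiting the regular variation of $b$. Writing $\epsilon_x(u) = b(xu)/b(x)$, the substitution $v = \epsilon_x(u)\, w$ yields
\begin{align*}
  \Pi_x f_x(u) = \int_0^\infty f_x(\epsilon_x(u) w) \, \Pi(xu, b(xu) \rmd w).
\end{align*}
By the uniform convergence theorem for regularly varying functions, $\epsilon_x(u) \to u^\kappa$ uniformly on compact subsets of $(0,\infty)$. Since $G_1 f(u) = \int f(u^\kappa w) G(\rmd w)$ is bounded and continuous on $(0,\infty)$ by dominated convergence, uniform convergence on a compact set $K \subset (0,\infty)$ will follow from the sequential criterion: for every $x_n \to \infty$ and $u_n \to u^* \in K$, one has $\Pi_{x_n} f_{x_n}(u_n) \to G_1 f(u^*)$. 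Since $x_n u_n \to \infty$, Assumption~\ref{hypo:markov-kernel} gives that $\mu_n := \Pi(x_n u_n, b(x_n u_n) \cdot)$ converges weakly to $G$ on $[0,\infty)$.

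In case~\ref{item:G0>0} the functions $\varphi_n(w) := f_{x_n}(\epsilon_{x_n}(u_n) w)$ are uniformly bounded, and combining $\epsilon_{x_n}(u_n) \to u^{*\kappa}$ with uniform convergence of $f_x$ to $f$ on compact subsets of $[0,\infty)$, they converge uniformly on compact subsets of $[0,\infty)$ to $\varphi(w) := f(u^{*\kappa} w)$. Theorem~\ref{theo:our-2CMT}\ref{item:unif-measure} then yields $\int \varphi_n \, \rmd\mu_n \to \int \varphi \, \rmd G = G_1 f(u^*)$, which is exactly what is required.

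The main obstacle is case~\ref{item:G0=0}, where $f_x$ may fail to converge uniformly near $0$ so the preceding argument cannot be applied directly on $[0,\infty)$. I would handle this by a truncation argument: fix $\eta > 0$ and choose continuity points $0 < \delta < M$ of $G$ with $G([0,\delta)) + G((M,\infty)) < \eta$, which is possible because $G$ is a probability measure with $G(\{0\}) = 0$. On $[\delta, M]$, the argument $\epsilon_{x_n}(u_n) w$ remains in a compact subset of $(0,\infty)$ for $n$ large, so $\varphi_n \to \varphi$ uniformly there and the truncated integrals converge to $\int_{[\delta,M]} \varphi \, \rmd G$ by weak convergence (using $G(\{\delta\}) = G(\{M\}) = 0$). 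The contributions from $[0,\delta)$ and $(M,\infty)$ are bounded by $\sup_{x,v} |f_x(v)|$ times $\mu_n([0,\delta)) + \mu_n((M,\infty))$, which by the Portmanteau theorem and the choice of $\delta, M$ is at most a constant multiple of $\eta$ for $n$ large. Letting $\eta \to 0$ completes the proof.
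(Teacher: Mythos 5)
Your proof is correct, and it rests on the same two pillars as the paper's own argument: the substitution $v=(b(xu)/b(x))w$ combined with the uniform convergence theorem for regularly varying functions, and the weak convergence $\Pi(xu,b(xu)\,\cdot\,)\Rightarrow G$ supplied by Assumption~\ref{hypo:markov-kernel}, fed into the second continuous mapping theorem. The organization, however, is genuinely different. The paper first proves $\sup_{a_0\le u\le a_1}|\Pi_xf_x(u)-\Pi_xf(u)|\to0$ by hand, using the uniform-in-$u$ tightness bounds \eqref{eq:lem-born-infty} and, for case \eqref{item:G0=0}, \eqref{eq:lem-born-0}; it then shows $\Pi_xf\to G_1f$ uniformly by invoking part~\eqref{item:unif-noyau} of Theorem~\ref{theo:our-2CMT}, the part of that theorem designed precisely to yield uniformity in the extra parameter. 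You instead reduce uniform convergence on a compact $K$ to convergence along arbitrary sequences $x_n\to\infty$, $u_n\to u^*\in K$ --- a legitimate reduction since you verify that the limit $G_1f$ is continuous --- which lets you treat the moving function $f_{x_n}$ and the moving kernel in a single application of part~\eqref{item:unif-measure}, and replaces the paper's uniform tightness estimates by the automatic tightness of the one weakly convergent sequence $\mu_n$. Your truncation to $[\delta,M]$ in case \eqref{item:G0=0} plays exactly the role of the paper's $\int_0^\eta+\int_\eta^{A_\epsilon}$ decomposition, with $G(\{0\})=0$ entering in the same way. The net effect is a leaner argument that never needs part~\eqref{item:unif-noyau} of Theorem~\ref{theo:our-2CMT}; the paper's version, in exchange, makes the uniformity over $[a_0,a_1]$ visible directly through the explicit bounds \eqref{eq:lem-born-b}--\eqref{eq:lem-born-0}.
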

\begin{proof}
  Fix some positive real numbers $0<a_0<a_1$. Since $b$ is regularly varying at
  infinity with positive index, without loss of generality, we can assume that
  $b$ is increasing and positive on $(a_0,\infty)$. Then, the ratio $b(xu)/b(x)$
  is uniformly bounded on $[a_0,a_1]$, i.e.
  \begin{align}
  \label{eq:lem-born-b}
    0 < \sup_{x\geq1} \sup_{a_0 \leq u \leq a_1} \frac{b(xu)}{b(x)} < \infty \; .
  \end{align}
  Fix some $\epsilon>0$. Then, there exists $A_\epsilon$ such that
  \begin{align}
  \label{eq:lem-born-infty}
    \limsup_{x\to\infty} & \sup_{a_0 \leq u \leq a_1}
    \Pi(xu,(b(x)A_\epsilon,\infty)) \leq \epsilon \; , \ \ \sup_{a_0 \leq u \leq
      a_1} G((u^\kappa A_\epsilon,\infty)) \leq \epsilon \; .
  \end{align}
  Moreover, if $G(\{0\})=0$, then there also exists $\eta>0$ such that
  \begin{align}
  \label{eq:lem-born-0}
    \limsup_{x\to\infty} & \sup_{a_0 \leq u \leq a_1} \Pi(xu,[0,b(x)\eta]) \leq
    \epsilon \; , \ \ \sup_{a_0 \leq u \leq a_1} G([0,u^\kappa\eta]) \leq
    \epsilon \; .
  \end{align}
  Let now $f_x$ and $f$ be as in the statement of the lemma. Then, by the
  uniform boundedness assumption and by~(\ref{eq:lem-born-infty}), there exists
  $C>0$ such that, for $u\in[a_0,a_1]$,
  \begin{align*}
    \left| \Pi_xf_x(u) - \Pi_x f(u)\right| & \leq \int_0^{A_\epsilon} |f_x(v) -
    f(v)| \Pi(xu,b(x)\rmd v) + C \epsilon \; .
  \end{align*}
  In case \eqref{item:G0>0}, it is assumed that $f_x$ converges uniformly on the
  compact sets of $[0,\infty)$, thus the previous bound yields
\begin{align*}
  \limsup_{x\to\infty} \sup_{a_0\leq u \leq a_1} \left| \Pi_xf_x(u) - \Pi_xf(u)\right|
    \leq C\epsilon \; .
\end{align*}
Since $\epsilon$ is arbitrary, this yields
\begin{align}
  \label{eq:lem-borne-fx}
  \limsup_{x\to\infty} \sup_{a_0\leq u \leq a_1} \left| \Pi_xf_x(u) - \Pi_xf(u)
  \right| = 0 \; .
\end{align}
In case~\eqref{item:G0=0}, we must further decompose the integral
into $\int_0^{A_\epsilon}=\int_0^{\eta}+\int_{\eta}^{A_\epsilon}$
and use the bound~(\ref{eq:lem-born-0}) to
obtain~(\ref{eq:lem-borne-fx}).

We now prove that $\Pi_xf$ converges uniformly on compact sets of
$(0,\infty)$ to $G_{1} f$. Define the function $f_t$ on
$(0,\infty)\times[0,\infty)$ by $f_t(u,v) = f(vb(t)/b(t/u))$.  By
the uniform convergence for regularly varying functions, $f_t$
converges uniformly on compact sets of $(0,\infty)\times[0,\infty)$
to $f(u^\kappa v)$.  Define $F_t(u) = \int_0^\infty f_t(u,v)
\Pi(t,b(t) \rmd v)$.  By item~\eqref{item:unif-noyau} of
Theorem~\ref{theo:our-2CMT}, $F_t$ converges to $G_{1} f$ uniformly
on compact sets of $(0,\infty)$. Note that by change of variables we
can write
\begin{align*}
  \Pi_xf(u) & = \int f(v)\Pi(xu,b(x)\rmd v) = \int f\left(\frac{vb(xu)}{b(x)}\right) \Pi(xu,b(xu)  \rmd v)  \\
  & = \int f_{xu}(u,v)\Pi(xu,b(xu)\rmd v)=F_{xu}(u)\; ,
\end{align*}
so $\Pi_xf$ converges uniformly on compact sets of $(0,\infty)$ to
$G_{1}$.

\end{proof}

\begin{proof}[Proof of Theorem~\ref{theo:markov-indep}]
  We must prove that for all $h\geq1$,
  \begin{multline}
    \label{eq:markov-tail-expar}
    \lim_{x\to\infty} \pr \left(\frac{X_0}x \leq y_0, \frac{X_1}{b_1(x)} \leq
      y_1, \dots, \frac{X_h}{b_h(x)} \leq y_h \mid X_0 > x \right) \\
    = \pr(Y_0 \leq y_0, Y_1 \leq y_1,\dots, Y_h \leq y_h) \; .
  \end{multline}
  The proof is by induction on $h$. We start by
  proving~(\ref{eq:markov-tail-expar}) in the case $h=1$. Recall that $F_0$ is
  the distribution of $X_0$ and define the measure $\nu_x$ by $\nu_x(\rmd u) =
  F_0(x\rmd u)/\bar F_0(x)$.  Let $f$ be a bounded continuous function on
  $[0,\infty)$. Then
\begin{align*}
  \esp\left[f\left(\frac{X_1}{b(x)}\right)\mid X_0>x\right] & =
  \int_{u=1}^\infty \int_{v=0}^\infty f(v) \Pi(xu,b(x)\rmd v) \nu_x(\rmd u) =
  \int_{u=1}^\infty \Pi_x f(u) \nu_x(\rmd u) \; ,
\end{align*}
and thus we must prove that
\begin{align}
  \label{eq:whatwewantotprove}
  \lim_{x\to\infty} \int_1^\infty \Pi_xf(u) \nu_x(\rmd u) = \int_1^\infty
  G_{1} f(u) \alpha u^{-\alpha-1} \, \rmd u \; .
\end{align}
We know that the measure $\nu_x$ converges vaguely on $(0,\infty)$
to the Pareto measure $\alpha u^{-\alpha-1} \, \rmd u$. By
Lemma~\ref{lem:enfin} (applied with $f_x=f$, thus not requiring the
assumption $G(\{0\})=0$), $\Pi_x f$ converges to $G_{1} f$ uniformly
on compact sets of $(0,\infty)$.  Thus, applying
Theorem~\ref{theo:our-2CMT}(i) we
obtain~(\ref{eq:whatwewantotprove}).

Consider now the higher dimensional distributions. Define the transition kernel
$\hpi{x}{h}$ on $[0,\infty)\times[0,\infty)^h$ by
\begin{align*}
  \hpi{x}{h} (u_0,\rmd\boldsymbol{u})= \prod_{i=1}^h \Pi(b_{i-1}(x)u_{i-1},b_i(x) \rmd u_i) \;
  ,
\end{align*}
with the convention $b_0(x)=x$.  For $f$  bounded and continuous on
$[0,\infty)^h$, define
\begin{align*}
  \hpi{x}{h}f(u_0) = \int_{\boldsymbol{u}\in[0,\infty)^h} f(\boldsymbol{u})\hpi{x}{h}(u_0,\rmd \boldsymbol{u}) \; .
\end{align*}
Then,
\begin{align*}
  \esp & \left[ f \left(\frac{X_1}{b_1(x)},\dots,\frac{X_h}{\scalfunccev_h(x)}\right)  \mid X_0 > x \right]
 = \int_{u_0=1}^\infty \hpi{x}{h} f (u_0)  \nu_x(\rmd u_0) \; .
\end{align*}
Define also the kernel $G_h$ on $(0,\infty)\times[0,\infty)^h$ by
\begin{align*}
  G_h f(u_0) & = \int_0^\infty \cdots \int_0^\infty f(u_1,\dots,u_h)
  \prod_{i=1}^h G(u_{i-1}^{-\kappa}\rmd u_i) \; .
\end{align*}
for any bounded continuous function $f$.
What we must prove is that $\hpi{x}{h}f$ converges uniformly on compact sets of
$(0,\infty)$ to $G_hf$ for any bounded continuous function $f$ on~$[0
,\infty)^h$. By Theorem~\ref{theo:our-2CMT}(i), this will yield the required
result. For $h=1$ this is what we have just proved.  Assume now that for
$h\geq1$, and any bounded continuous function $f$ on $[0,\infty)^h$,
$\hpi{x}{h}f$ converges uniformly on compact sets of $(0,\infty)$ to
$G_hf$. Without loss of generality, we can assume that the function $f$ is of
the form $f(u_1,\dots,u_{h+1})= f_1(u_1)f_2(u_2,\dots,u_{h+1})$, where $f_1$ and
$f_2$ are continuous and bounded on $[0,\infty)$ and $[0,\infty)^h$,
respectively. Then, recalling that $b_h = b_{h-1} \circ b$,
\begin{align}
  \label{eq:induction-h}
  \hpi{x}{h+1}f(u_0) & = \int_0^\infty f_1(u_1) \hpi{b(x)}{h}f_2(u_1)
  \Pi(xu_0,b(x)\rmd u_1)  = \Pi_x (f_1\hpi{x}{h}f_2)(u_0) \; .
\end{align}
By the induction assumption, the sequence functions $f_1\hpi{x}{h}f_2$ converges
uniformly on compact sets of $(0,\infty)$ to the continuous and bounded function
$f_1 G_hf_2$.  Thus, by Lemma~\ref{lem:enfin}, $\hpi{x}{h+1}f$ converges to
$G_1(f_1G_hf_2)=G_{h+1} f_1f_2=G_{h+1}f$ uniformly on the compact sets of
$(0,\infty)$.
\end{proof}

\section{Concluding remarks}
\label{sec:concluding}
In this paper, we have put the concept of conditional extreme values in the
context of univariate time series. We have introduced the conditional scaling
exponent $\kappa_h$ of a time series $\{X_t\}$ at lag $h$. If the time series is
stationary and its finite dimensional marginal distributions are regularly
varying, then $\kappa_h\in[0,1]$ and a value $\kappa_h<1$ implies the extremal
independence of the bivariate distribution $(X_0,X_h)$.  We have given
conditions for Markov chains  and other time series models commonly used
in financial econometrics. This work is part of an ongoing project on extremally
independent time series. We now briefly discuss several possible future lines of
research.

\paragraph{Vector valued time series.}
Consider a $d$-dimensional vector valued times series
$\sequence{\boldsymbol{X}}{t}{\Zset}$ such that for each $h\geq0$, the
$(h+1)d$-dimensional vector $(\boldsymbol{X}_0,\dots,\boldsymbol{X}_h)$ is
regularly varying with index $-\alpha$. For a relatively compact Borel set
$C\in\overline{\Rset}^{h+1}\setminus\{\boldsymbol0\}$ (possibly with further regularity conditions), we may be
interested in the limiting distribution of
$(\boldsymbol{X}_1,\dots,\boldsymbol{X}_h)$ given that $\boldsymbol{X}_0 \in x C$,
where $xC = \{x\boldsymbol{y}, \boldsymbol{y}\in C\}$ and $x$ is large. In the
case of extremal dependence, the exponent measure of the vector
$(\boldsymbol{X}_0,\dots,\boldsymbol{X}_h)$ provides the necessary
information. In the case of extremal independence, it is useless, and we must
investigate the existence of scaling functions $b_1,\dots,\scalfunccev_h$ such
that the conditional distribution of
\begin{align*}
  \left( \frac{\boldsymbol{X}_1}{\scalfunccev_1(x)}, \dots, \frac{\boldsymbol{X}_h}{\scalfunccev_h(x)} \right)
\end{align*}
given $\boldsymbol{X}_0 \in xC$ converges to a proper probability distribution.
The choice of the set $C$ is determined by the problem considered. It could be
the complement of the unit ball for some norm $\|\cdot\|$ on $\Rset^d$, if the
event of interest is that $\|\boldsymbol{X}_0\|$ is large, or a half-space such
as $C=\{\boldsymbol{y}\in \Rset^d \mid a_1y_1+\cdots+a_ky_d>1\}$, if the event
of interest is that a certain linear combination (a portfolio) is large.

\paragraph{Different conditioning events.} We can also consider univariate time
series and various conditioning events such as $\{y_0>x,\dots, y_k>x\}$ ($k+1$
successive values are large), or $\{\max\{y_0,\dots,y_k\}>x\}$ (at least one
large value among the first $k+1$), or any combination of such events. Again, in
the case of extremal dependence the appropriate scaling is given by the
multivariate regular variation property and the entire information is given by
the exponent measure.  In the case of extremal independence different scaling
functions must be used for different lags and the limiting distributions are not
given by the exponent measure.

\paragraph{Statistical procedures.}
The next step is obviously to provide valid statistical procedures to estimate
the conditional scaling exponents, the scaling functions, the conditional
limiting distributions and other quantities of interests such as the CTE.  As
usual in extreme value theory, these quantities cannot be estimated empirically
since they are relevant only in a domain where few observations are
available. Therefore extrapolation outside the range of available data is needed
and semiparametric estimators must be defined.  For instance, one can estimate
$m_h$ (defined in~(\ref{eq:def-mh})) and $\kappa_h$ and then to estimate
$\CTE_h^+(x)$ for $x$ outside the range of the data by
 \begin{align*}
   \widehat{\CTE}^{\rm{SP}}_h(x) =  x^{\hat \kappa_h} \hat{m}_h \; .
 \end{align*}
 Non parametric estimators of the conditional limiting distributions and of the
 scaling functions, as well as semiparametric estimators of the conditional
 scaling exponents are the subject of our future research.

\section*{Acknowledgment}
The authors are grateful to Holger Drees and Anja Janssen for the communication
of the Reference~\cite{janssen:drees:2013} and for fruitful conversations during
the 8th EVA conference in Shanghai which allowed to correct and improve a
preliminary version of this paper. We are also grateful to an associate editor
and an anonymous referee who helped reorganize and improve the paper.

\end{document}